\newcounter{counter}
\newtheorem{lemma}[counter]{Lemma}
\newtheorem{theorem}[counter]{Theorem}
\newtheorem{remark}[counter]{Remark}
\newtheorem{corollary}[counter]{Corollary}
\newtheorem{definition}[counter]{Definition}
\newtheorem{proposition}[counter]{Proposition}
\newenvironment{customthm}[1]
{\innercustomthm}
{\endinnercustomthm}
\begin{document}

\title{A note on logarithmic equidistribution}
\author{Gerold Schefer}
\maketitle

\begin{abstract}For every algebraic number $\kappa$ on the unit circle which is not a root of unity we prove the existence of a strict sequence of algebraic numbers whose height tends to zero, such that the averages of the evaluation of $f_\kappa(z)=\log|z -\kappa|$ in the conjugates are essentially bounded from above by $-h(\kappa)$. This completes a characterisation on functions $f_\kappa$ initiated by Autissier and Baker-Masser, who cover the cases $\kappa=2$ and $|\kappa|\ne 1$ respectively. Using the same ideas we also prove analogues in the $p$-adic setting.\end{abstract}

\section{Introduction}
For $P\in\mathbb Z[X]\setminus \{0\}$ the logarithmic Mahler measure is defined by $m(P)=\int_0^1\log|P(e^{2\pi i t})|dt.$
Let $\overline{\mathbb Q}$ be an algebraic closure of the rational numbers, the field of algebraic numbers. Let $\alpha\in\overline{\mathbb Q}$. There is a unique irreducible polynomial $P=a_dX^d+\dots+a_0\in\mathbb Z[X]$ such that $P(\alpha)=0$ and $a_d\ge 1$. This is called the $\mathbb Z$-minimal polynomial of $\alpha$. The absolute logarithmic Weil height of $\alpha$ is defined by $h(\alpha)=\frac{m(P)}{d}$. There are also multiplicative versions of the Mahler measure and the height, denoted by $M(P)=e^{m(P)}$ and $H(\alpha)=e^{h(\alpha)}$. By Jensen's formula we have $M(P)=|a_d|\prod_{i:|\alpha_i|>1}|\alpha_i|$, where $\alpha_1,\dots,\alpha_d\in\mathbb C$ are the roots of $P$. This formula shows that the Mahler measure is multiplicative and satisfies $M(P)\ge 1$. Let us call a sequence $(\alpha_n)_{n\in\mathbb N}$ of algebraic numbers strict, if for every given $\alpha\in\overline{\mathbb Q}$ there are at most finitely many $n\in\mathbb N$ such that $\alpha_n=\alpha$. Let us write $\mu_{\infty}$ for the set of roots of unity in $\overline{\mathbb Q}$.

Bilu's Theorem 1.1 in \cite{Bilu} about algebraic numbers of small height states the following.
\begin{theorem}\label{thm:Bilu}Let $(\alpha_n)_{n\in\mathbb N}$ be a strict sequence of nonzero algebraic numbers such that \\
${\lim_{n\to\infty}h(\alpha_n)=0}$. Then for every bounded and continuous function $f:\mathbb C\setminus\{0\}\to \mathbb R$ we have \begin{equation}\label{eq:lim}\lim_{n\to\infty}\frac{1}{[\mathbb Q(\alpha_n):\mathbb Q]}\sum_{\sigma:\mathbb Q(\alpha_n)\to \mathbb C} f(\sigma(\alpha_n))= \int_0^1 f\left(e^{2\pi i t}\right) dt.\end{equation}\end{theorem}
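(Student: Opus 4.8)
The plan is to reformulate \eqref{eq:lim} as a statement about weak convergence of measures and then to pin down the limit by a logarithmic--energy argument. For each $n$ put $d_n=[\mathbb Q(\alpha_n):\mathbb Q]$, let $\alpha_n^{(1)},\dots,\alpha_n^{(d_n)}\in\mathbb C$ be the conjugates of $\alpha_n$, and let $\mu_n=\frac1{d_n}\sum_{i=1}^{d_n}\delta_{\alpha_n^{(i)}}$ be the associated Borel probability measure on $\mathbb C^{\ast}:=\mathbb C\setminus\{0\}$; since the embeddings $\sigma\colon\mathbb Q(\alpha_n)\to\mathbb C$ correspond bijectively to these conjugates, the left--hand side of \eqref{eq:lim} is $\int f\,d\mu_n$. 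The right--hand side is $\int f\,d\lambda$ for $\lambda$ the normalised arclength (Haar) measure on the unit circle. So it suffices to prove $\mu_n\to\lambda$ weakly on $\mathbb C^{\ast}$, and since the limit is unique it is enough to show that every weak subsequential limit of $(\mu_n)$ equals $\lambda$.

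First I would establish tightness. Let $P_n=a_{d_n}X^{d_n}+\dots+a_0$ be the $\mathbb Z$--minimal polynomial of $\alpha_n$. Jensen's formula gives $\int\log^+|z|\,d\mu_n(z)=\frac1{d_n}\sum_{|\alpha_n^{(i)}|>1}\log|\alpha_n^{(i)}|\le\frac{m(P_n)}{d_n}=h(\alpha_n)$, and, since reversing the coefficients of $P_n$ leaves the Mahler measure unchanged, $h(\alpha_n^{-1})=h(\alpha_n)$, so also $\int\log^+|1/z|\,d\mu_n(z)\le h(\alpha_n)$. As $h(\alpha_n)\to0$, Markov's inequality yields, for every $\varepsilon>0$, an $R$ with $\mu_n(\{|z|>R\})+\mu_n(\{|z|<1/R\})<\varepsilon$ for all large $n$; hence $(\mu_n)$ is tight on $\mathbb C^{\ast}$ and, by Prokhorov's theorem, has weakly convergent subsequences, all of whose limits $\nu$ are probability measures on $\mathbb C^{\ast}$. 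Moreover, by lower semicontinuity of $\log^+|z|$ and $\log^+|1/z|$ along weakly convergent sequences, $\int\log^+|z|\,d\nu=\int\log^+|1/z|\,d\nu=0$, so every such $\nu$ is supported on the unit circle.

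It remains to identify $\nu$ with $\lambda$, and this is where the arithmetic enters: the discriminant $D_n=\operatorname{disc}(P_n)$ is a nonzero rational integer (nonzero because $P_n$ is separable), so $|D_n|\ge1$. From $|D_n|=|a_{d_n}|^{2d_n-2}\prod_{i\ne j}|\alpha_n^{(i)}-\alpha_n^{(j)}|$ together with $0\le\log|a_{d_n}|\le m(P_n)=d_nh(\alpha_n)$ one gets $\frac1{d_n^2}\sum_{i\ne j}\log\frac1{|\alpha_n^{(i)}-\alpha_n^{(j)}|}\le\frac{2\log|a_{d_n}|}{d_n}\le2h(\alpha_n)\to0$. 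The left--hand side is the diagonal--free logarithmic energy of $\mu_n$, so the principle of descent from potential theory gives, for a subsequential weak limit $\nu$, the bound $I(\nu):=\iint\log\frac1{|z-w|}\,d\nu(z)\,d\nu(w)\le0$. On the other hand the unit circle has logarithmic capacity $1$, hence Robin constant $0$: the measure $\lambda$ is its equilibrium measure, satisfies $I(\lambda)=0$ (again Jensen's formula, $m(X-1)=0$), and is the unique minimiser of $I$ among probability measures on the circle, by strict convexity of the energy functional. Since $\nu$ is a probability measure on the circle with $I(\nu)\le0=\min I$, we conclude $\nu=\lambda$, which finishes the proof.

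I expect the main obstacle to be the interaction between the non-compactness of $\mathbb C^{\ast}$ and the potential--theoretic machinery: the energy/equilibrium-measure argument only applies after tightness has been secured, which is exactly what the elementary estimates $\int\log^+|z|\,d\mu_n\le h(\alpha_n)$ and its inversion provide, and one must invoke the principle of descent with the usual truncation of the kernel $\log\frac1{|z-w|}$ to handle its singularity on the diagonal (and the vanishing amount of mass escaping towards $0$ and $\infty$). The conceptual heart of the argument is the last identification step: the integrality of $\operatorname{disc}(P_n)$ is what forces the weak limit to be the equilibrium measure of the circle, rather than an arbitrary probability measure supported there.
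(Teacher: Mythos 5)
A preliminary remark on the comparison you were asked for: the paper does not prove Theorem \ref{thm:Bilu} at all — it is quoted from Bilu's article \cite{Bilu} — so there is no internal proof to measure your argument against. Your proposal is essentially the classical potential-theoretic proof of Bilu's theorem: tightness from $\int\log^+|z|\,d\mu_n\le h(\alpha_n)$ and the same bound for $1/\alpha_n$, support of any weak limit on the unit circle, an energy bound coming from $|\mathrm{disc}(P_n)|\ge 1$, and identification of the limit as the equilibrium measure of the circle. All the individual estimates you write down are correct.

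There is, however, a genuine gap at the descent step, and it is visible from the fact that you never use the hypothesis that the sequence is strict — without which the statement is false (take $\alpha_n=1$ for all $n$: then $h(\alpha_n)=0$, each $\mu_n=\delta_1$ is supported on the circle, the diagonal-free energy is an empty sum, yet the limit is $\delta_1\ne\lambda$). The missing ingredient is that $d_n=[\mathbb Q(\alpha_n):\mathbb Q]\to\infty$, which follows from strictness together with Northcott's theorem (a strict sequence with $h(\alpha_n)\to 0$ cannot contain a subsequence of bounded degree). This is exactly what makes the truncation work: with $k_T(z,w)=\min\{T,\log\tfrac1{|z-w|}\}$ one has $\iint k_T\,d\mu_n\,d\mu_n\le\frac1{d_n^2}\sum_{i\ne j}\log\tfrac1{|x_i-x_j|}+\tfrac{T}{d_n}$, and the diagonal contribution $T/d_n$ only disappears because $d_n\to\infty$; the ``principle of descent'' as you invoke it does not apply to the diagonal-free discrete energies without this. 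A second, smaller point that you wave at but which is where real work remains: to use the bounded-kernel argument you must restrict to a compact annulus $\{1/R\le|z|\le R\}$, and discarding the pairs with a conjugate outside the annulus can \emph{increase} the off-diagonal energy, since such pairs contribute very negative values of $\log\tfrac1{|x_i-x_j|}$. One needs the extra estimate $\log|z-w|\le\log 2+\log^+|z|+\log^+|w|$ together with the Markov bound on the number of escaping conjugates to show their total contribution is $o(1)$; once these two points are supplied, your identification of the limit via $I(\nu)\le 0$, $I(\lambda)=0$ and uniqueness of the minimiser goes through.
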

It is a reasonable question to ask, whether we can relax the conditions on $f$ such that the limit \eqref{eq:lim} still holds. We consider a family of functions which have logarithmic singularities. For any algebraic number $\kappa$ we let $f_\kappa: \mathbb C\setminus \{\kappa\} \to\mathbb R$ be given by $f_\kappa(z)=\log|z-\kappa|$. We want to determine the set $\mathcal{K}_\infty$ of all $\kappa\in\overline{\mathbb Q}$ with the following property: For any strict sequence $(\alpha_n)_{n\in\mathbb N}$ of algebraic numbers such that $\alpha_n$ is never a conjugate to $\kappa$ and $\lim_{n\to\infty}h(\alpha_n)=0$ the limit \eqref{eq:lim} holds for the function $f=f_\kappa$. The integral equals $\log \max\{1,|\kappa|\}$ by Jensen's formula.

Autissier proved in \cite{Aut} that $2\not\in \mathcal{K}_\infty$ and thus, provided a counterexample to a conjecture of Pineiro-Szpiro-Tucker p. 241 \cite{PST}. Breuillard showed in Lemma 6.5 \cite{Br}, that $1$ belongs to $\mathcal{K}_\infty$. From this one can show that $\mu_\infty\subseteq \mathcal{K}_\infty$. For the sake of completeness we give a quantitative version in Corollary \ref{cor:muInf}. The fact $\mu_\infty\subseteq \mathcal{K}_\infty$ also follows from the work of Chambert-Loir and Thuillier, Théorème 1.2 b) \cite{CLT}.
Frey also looked at $\kappa=1$ and showed in Lemma 3.5 \cite{Fr} that if $[\mathbb Q(\alpha):\mathbb Q]\ge 16, 0<\delta<1/2$ and $h(\alpha)^{1/2}\le 1/2$, then $\frac{40}{\delta^4}h(\alpha)^{1/2-\delta}$ is an upper bound for the average in \eqref{eq:lim}. For $\kappa=0$ it is an exercise to show that the average is bounded in absolute value by the height, see (1.4) in \cite{BM}, and thus $0\in \mathcal{K}_\infty$. In the same paper, R. Baker and D. Masser were able to prove in Theorem 1.12 \cite{BM} that every $0\neq\kappa\in \mathcal{K}_\infty$ satisfies $|\kappa|=1$. More precisely, they prove:

\begin{theorem}\label{thm:BM}Let $\kappa\ne  0$ be algebraic with $|\kappa| \ne 1$. Then there
exists a strict sequence of algebraic numbers $(\alpha_n)_{n\in\mathbb N}$ such that no $\alpha_n$ is a conjugate of $\kappa$, $\lim_{n\to\infty}h(\alpha_n)= 0$ and $$\lim_{n\to \infty}\left|\frac{1}{[\mathbb Q(\alpha_n):\mathbb Q]}\sum_{\sigma:\mathbb Q(\alpha_n)\to \mathbb C} \log|\sigma(\alpha_n)-\kappa| - \int_0^1 \log|e^{2\pi i t}-\kappa| dt\right|= | \log |\kappa|| > 0.$$\end{theorem}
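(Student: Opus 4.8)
The plan is to write down, for each admissible $\kappa$, an explicit family of integer polynomials, take $\alpha_n$ to be a suitable root, and reduce everything to one short computation. The starting point is the identity
\[
\frac{1}{[\mathbb Q(\alpha):\mathbb Q]}\sum_{\sigma:\mathbb Q(\alpha)\to\mathbb C}\log|\sigma(\alpha)-\kappa|=\frac{1}{\deg Q}\log\frac{|Q(\kappa)|}{|c|},
\]
valid whenever $\alpha$ is not conjugate to $\kappa$, where $Q=cX^{\deg Q}+\dots\in\mathbb Z[X]$ is the $\mathbb Z$-minimal polynomial of $\alpha$ (simply because $Q(\kappa)=c\prod_i(\kappa-\alpha_i)$ over the conjugates $\alpha_i$ of $\alpha$, and $|\kappa-\alpha_i|=|\alpha_i-\kappa|$). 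As $\int_0^1\log|e^{2\pi it}-\kappa|\,dt=\log\max\{1,|\kappa|\}$, it suffices to construct a strict sequence $(\alpha_n)$ with $h(\alpha_n)\to 0$, no $\alpha_n$ conjugate to $\kappa$, and $\tfrac{1}{\deg Q_n}\log(|Q_n(\kappa)|/|c_n|)\to 0$; then the quantity inside the absolute value in the theorem tends to $-\log\max\{1,|\kappa|\}$, whose modulus is $|\log|\kappa||>0$ by hypothesis. Moreover, replacing $(\kappa,\alpha_n)$ by $(1/\kappa,1/\alpha_n)$ reduces to the case $|\kappa|>1$: inversion preserves the height and the non-conjugacy condition, and from $\log|1/\alpha_i-\kappa|=\log|\kappa|+\log|1/\kappa-\alpha_i|-\log|\alpha_i|$ together with the fact that $\tfrac{1}{\deg Q_n}\sum_i\log|\alpha_i|\to 0$ (its modulus is at most $h(\alpha_n)$, as in the case $\kappa=0$ mentioned above) one sees that the two statements are equivalent.

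From now on $|\kappa|>1$. Let $g=b_eX^e+\dots+b_0\in\mathbb Z[X]$ be the $\mathbb Z$-minimal polynomial of $\kappa$, so $e=[\mathbb Q(\kappa):\mathbb Q]$ and $b_0=g(0)\ne 0$, and fix a prime $p\nmid b_0$. Put
\[
P_n(X)=g(X)\,X^n-p\in\mathbb Z[X],\qquad \deg P_n=n+e,\qquad P_n(\kappa)=-p .
\]
Three facts drive the argument. (i) By Landau's inequality $M(P_n)\le\|P_n\|_2=(\|g\|_2^2+p^2)^{1/2}$, a bound independent of $n$, so $m(P_n)$ and the Mahler measure of every factor of $P_n$ stay bounded. (ii) The $p$-adic Newton polygon of $P_n$ has as its first edge the segment from $(0,1)$ to $(n,0)$ — the constant term $-p$ has $p$-adic valuation $1$, the coefficient of $X^n$ is $b_0$ of valuation $0$, and the exponents $1,\dots,n-1$ do not occur — of slope $-1/n$ with coprime numerator and denominator; hence $P_n$ has an irreducible factor over $\mathbb Q_p$ of degree exactly $n$, and therefore an irreducible factor $Q_n$ over $\mathbb Q$ with $n\le\deg Q_n\le n+e$. (iii) Since $P_n(\kappa)=-p\ne 0$ and $g$ is irreducible, $g$ is coprime to $P_n$, hence to $Q_n$; so no root of $Q_n$ is conjugate to $\kappa$. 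Let $\alpha_n$ be any root of $Q_n$.

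It remains to check the required properties. The sequence is strict because $\deg\alpha_n\ge n\to\infty$. Since $m(Q_n)\le m(P_n)=O(1)$ and $\deg Q_n\ge n$, we get $h(\alpha_n)=m(Q_n)/\deg Q_n=O(1/n)\to 0$. For the key estimate, write $P_n=Q_n\widetilde Q_n$ in $\mathbb Z[X]$, so $\deg\widetilde Q_n\le e$ and $|Q_n(\kappa)|=p/|\widetilde Q_n(\kappa)|$. The elementary bound $|f(\kappa)|\le(|\kappa|+1)^{\deg f}M(f)$ with $M(\widetilde Q_n)\le M(P_n)=O(1)$ gives $|\widetilde Q_n(\kappa)|\le C$; and $|\widetilde Q_n(\kappa)|\ge c>0$, because the same upper bound at each conjugate $\kappa^{(j)}$ of $\kappa$ together with $|\mathrm{Res}(\widetilde Q_n,g)|\ge 1$ (as $\widetilde Q_n,g$ are coprime) and $\mathrm{Res}(\widetilde Q_n,g)=\pm b_e^{\deg\widetilde Q_n}\prod_j\widetilde Q_n(\kappa^{(j)})$ forces $\prod_j|\widetilde Q_n(\kappa^{(j)})|\ge b_e^{-e}$. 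Hence $|Q_n(\kappa)|$ lies between two positive constants; as the leading coefficient $c_n$ of $Q_n$ divides $b_e$, we conclude $\tfrac{1}{\deg Q_n}\log(|Q_n(\kappa)|/|c_n|)=O(1/n)\to 0$. Thus the conjugate-average tends to $0\ne\log|\kappa|=\int_0^1\log|e^{2\pi it}-\kappa|\,dt$, which yields the limit $|\log|\kappa||$ as required; the case $|\kappa|<1$ follows from the inversion reduction.

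The step needing the most care is precisely this last estimate. A priori $P_n$ need not be irreducible, and one must simultaneously ensure that the extracted factor $Q_n$ has degree of order $n$ (so that $h(\alpha_n)\to 0$) and that the complementary factor $\widetilde Q_n$ — of bounded degree but otherwise not under direct control — contributes neither an exponentially small nor an exponentially large value at $\kappa$. This is exactly what the Newton-polygon degree bound $\deg\widetilde Q_n\le e$ and the resultant lower bound are for, and it is the only place where the precise shape $g(X)X^n-p$ (rather than an arbitrary small perturbation of $g(X)X^n$) is genuinely used.
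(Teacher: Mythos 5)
Your proposal is correct, but it takes a genuinely different route from the paper at the decisive step. The paper (following Baker--Masser, Theorem 1.12 \cite{BM}) perturbs in the other "direction": it takes $A_n(X)=X^nQ(X)-\tfrac1l$ with $Q(\kappa)=0$ and $l\nmid q_m$, so that $lA_n$ is outright irreducible by Eisenstein applied to the reciprocal polynomial $lX^{n+m}A_n(1/X)$; then $A_n$ itself is the minimal polynomial of $\alpha_n$, the identity $\alpha_n^{-n}=lQ(\alpha_n)$ gives $h(\alpha_n)\to0$ in one line via Lemma \ref{lem:bdHQ}, and $A_n(\kappa)=-1/l$ makes the average $\tfrac{-\log|lq_m|}{n+m}\to0$ with no further work. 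You instead take $P_n=g(X)X^n-p$ with $p\nmid g(0)$, which is generally \emph{not} Eisenstein in either direction, so you must extract a large irreducible factor $Q_n$ via the $p$-adic Newton polygon (slope $-1/n$ segment of length $n$), control $h(\alpha_n)$ by Landau plus multiplicativity of the Mahler measure, and then bound the bounded-degree cofactor $\widetilde Q_n$ at $\kappa$ from above by a Mahler-measure estimate and from below by a resultant argument; these steps are all correct (including the inversion reduction to $|\kappa|>1$, which matches the paper's, though your opening "it suffices" sentence is literally accurate only for $|\kappa|>1$ — the case $|\kappa|<1$ is indeed rescued by the inversion identity you write next). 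Amusingly, your "big irreducible factor plus lower bound on the cofactor at $\kappa$" scheme is essentially the strategy the paper reserves for the harder Theorems \ref{thm:kappa} and \ref{thm:kappaGen} (generalized Eisenstein, Lemma \ref{lem:Eis}, plus the product-formula bound, Lemma \ref{lem:PatKappa}); for Theorem \ref{thm:BM} the paper's choice of perturbation buys full irreducibility and a two-line proof, while your choice buys robustness (no need for the reciprocal-Eisenstein trick) at the cost of Newton-polygon and resultant machinery.
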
 

In particular, the errors do not converge to zero and hence all the $\kappa$ considered in the theorem cannot belong to $\mathcal{K}_\infty$. Just below Theorem 1.12 \cite{BM}, R. Baker and D. Masser state that it would be interesting to know what happens for $|\kappa|=1$. Since the roots of unity are already understood, we are left with algebraic numbers $\kappa$ on the unit circle which are not roots of unity. Note that in this case we have $\int_0^1 f_\kappa(e^{2\pi i t}) dt = \log\max\{ 1, |\kappa|\} =0$. The main aim of this article is to show that such $\kappa$ never belong to $\mathcal{K}_\infty$. This fact is implied by the following theorem.

\begin{theorem}\label{thm:kappa}Let $\kappa\in\overline{\mathbb Q}\setminus\mu_\infty$ be on the unit circle and $d=[\mathbb Q(\kappa):\mathbb Q]$. Then there exists a strict sequence of algebraic numbers $(\alpha_n)_{n\in\mathbb N}$ such that no $\alpha_n$ is a conjugate of $\kappa$, $\lim_{n\to\infty}h(\alpha_n)= 0$,$$\limsup_{n\to\infty}\frac{1}{[\mathbb Q(\alpha_n):\mathbb Q]}\sum_{\sigma: \mathbb Q(\alpha_n)\to\mathbb C}\log|\sigma(\alpha_n)-\kappa|\le -h(\kappa)<0\text{ and }$$
$$\liminf_{n\to\infty}\frac{1}{[\mathbb Q(\alpha_n):\mathbb Q]}\sum_{\sigma: \mathbb Q(\alpha_n)\to\mathbb C}\log|\sigma(\alpha_n)-\kappa|\ge -d(h(\kappa)+\log(2)).$$\end{theorem}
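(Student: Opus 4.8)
The plan is to prove the two inequalities separately: the $\liminf$ bound is soft and holds for \emph{any} admissible sequence, while the $\limsup$ bound is the heart of the matter and requires an explicit construction.

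\emph{The lower bound.} First I would record that if $\alpha$ has $\mathbb Z$-minimal polynomial $Q$ of degree $D$ and leading coefficient $q$, then $Q(\kappa)=q\prod_i(\kappa-\alpha^{(i)})$ gives
$$\frac{1}{D}\sum_{\sigma:\mathbb Q(\alpha)\to\mathbb C}\log|\sigma(\alpha)-\kappa|=\frac{1}{D}\bigl(\log|Q(\kappa)|-\log|q|\bigr).$$
Writing $P=a_dX^d+\cdots$ with conjugates $\kappa=\kappa_1,\dots,\kappa_d$, the hypothesis that $\alpha$ is not conjugate to $\kappa$ makes $\mathrm{Res}(P,Q)=a_d^{D}q^{d}\prod_{i,j}(\kappa_j-\alpha^{(i)})$ a nonzero integer, so $|\mathrm{Res}(P,Q)|\ge 1$. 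Isolating the factor $\prod_i|\kappa_1-\alpha^{(i)}|$ and using $|Q(\kappa_j)|=|q|\prod_i|\kappa_j-\alpha^{(i)}|\le(1+|\kappa_j|)^{D}M(Q)$ for $j\ge 2$, together with $\tfrac1D\log|q|\le h(\alpha)$ and $\tfrac1D\log M(Q)=h(\alpha)$, yields
$$\frac{1}{D}\sum_{i}\log|\kappa-\alpha^{(i)}|\ \ge\ -\log|a_d|-\sum_{j=2}^{d}\log(1+|\kappa_j|)-d\,h(\alpha).$$
Applying this to $\alpha_n$ and letting $n\to\infty$ (so $h(\alpha_n)\to0$), the $\liminf$ is at least $-\log|a_d|-\sum_{j\ge2}\log(1+|\kappa_j|)$; then $\log(1+|\kappa_j|)\le\log2+\log^+|\kappa_j|$ and $\log|a_d|+\sum_j\log^+|\kappa_j|=m(P)=d\,h(\kappa)$ give the claimed bound $-d(h(\kappa)+\log 2)$. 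This uses nothing about the construction.

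\emph{The upper bound, reduced to a polynomial construction.} By the displayed identity and $\tfrac1D\log|q|\le h(\alpha)\to0$, it is enough to produce, for large $n$, an integer polynomial $Q_n$ with (i) $\deg Q_n=D_n\to\infty$; (ii) $m(Q_n)=o(D_n)$; (iii) $Q_n(\kappa)\neq0$; (iv) $\tfrac1{D_n}\log|Q_n(\kappa)|\le -h(\kappa)-\varepsilon_0$ for a fixed $\varepsilon_0>0$. Given such $Q_n$, I would factor it over $\mathbb Z$ into irreducibles $\psi_{n,k}$: the $\deg$-weighted average of the per-factor quantities $\tfrac1{\deg\psi_{n,k}}\bigl(\log|\psi_{n,k}(\kappa)|-\log|\mathrm{lc}(\psi_{n,k})|\bigr)$ equals $\tfrac1{D_n}\bigl(\log|Q_n(\kappa)|-\log|\mathrm{lc}(Q_n)|\bigr)\le -h(\kappa)-\varepsilon_0+o(1)$, while a Cauchy bound on the moduli of the roots (all $O(1)$ once $\|Q_n\|$ is controlled) combined with $|\mathrm{Res}(P,\psi_{n,k})|\ge1$ bounds each per-factor quantity below by a constant $-C$. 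Discarding the factors with $m(\psi_{n,k})/\deg\psi_{n,k}$ not tending to $0$ — whose total degree is $o(D_n)$ since $m(Q_n)=o(D_n)$ — leaves a factor $\psi_n$ with per-factor average $\le -h(\kappa)$ (for $n$ large, using the $\varepsilon_0$-slack) and whose roots have height $\to0$; I set $\alpha_n$ to be a root of $\psi_n$. Imposing also that $Q_n$ avoids the finitely many cyclotomic divisors $\Phi_m$ with $\tfrac1{\varphi(m)}\log|\Phi_m(\kappa)|\le-h(\kappa)$ (finitely many, since $\tfrac1{\varphi(m)}\log|\Phi_m(\kappa)|\to0$ as $m\to\infty$ by equidistribution of primitive $m$-th roots) forces $\psi_n$ non-cyclotomic, so $\alpha_n\notin\mu_\infty$; since $h(\alpha_n)\to0$ the sequence is then strict, and (iii) gives that $\alpha_n$ is never conjugate to $\kappa$. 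The $\liminf$ bound for this sequence follows from the first part.

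\emph{The construction of $Q_n$ — where I expect the main difficulty.} Here degree plays a decisive role: $\kappa$ lies on the unit circle and is not a root of unity, so $d\ge 2$ (in fact $d\ge 4$, with $P$ reciprocal), hence $Q(\kappa)$ ranges over $\mathbb Z[\kappa]$ rather than $\mathbb Z$ and $|Q(\kappa)|$ can lie far below $1$ — the resultant inequality only forces $|Q(\kappa)|\gtrsim(\deg Q)^{-(d-1)}M(P)^{-\deg Q}$, which is well below $e^{-h(\kappa)\deg Q}=M(P)^{-\deg Q/d}$, so (iv) is not excluded. To realize it I would take $Q_n$ with coefficients of controlled size — so (ii) follows from Landau's inequality $M(Q_n)\le\|Q_n\|_2$ — and with $|Q_n(\kappa)|$ exponentially small; the exponential smallness should come from a Dirichlet box argument producing an integer polynomial whose monomials nearly cancel at $\kappa$ while remaining large in absolute value at the other conjugates $\kappa_2,\dots,\kappa_d$, so that $|\mathrm{Res}(P,Q_n)|$ stays bounded. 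The delicate point, I expect, is to do all of this at once: pushing $|Q_n(\kappa)|$ below the rate $h(\kappa)$, keeping the coefficients small enough that $m(Q_n)=o(D_n)$, and still keeping $Q_n(\kappa)\neq0$, i.e. $P\nmid Q_n$ — precisely because exponential smallness of $|Q_n(\kappa)|$ with small coefficients pushes $Q_n$ toward being an actual multiple of $P$. In the $p$-adic setting the same scheme should apply verbatim with the archimedean $|Q_n(\kappa)|$ replaced by a $p$-adic absolute value and Jensen's formula by its non-archimedean analogue.
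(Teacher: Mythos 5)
Your soft half is fine: the $\liminf$ bound via $|\mathrm{Res}(P,Q)|\ge 1$ is correct and is essentially the paper's own route (its Lemma \ref{lem:PatKappa}), and your reduction of the $\limsup$ to producing integer polynomials $Q_n$ with (i)--(iv) is a sensible outline. But the heart of the theorem is exactly the construction you leave open, and your sketch does not contain the idea that makes it work. Running the box principle directly at $\kappa$, the only obstruction to $P\mid Q_n$ you have is that vanishing forces $M(Q_n)\ge M(P)=H(\kappa)^d$ --- a constant, immediately swamped once $\deg Q_n$ is large, so nonvanishing cannot be guaranteed this way. The paper's key move is to run the pigeonhole at $\kappa^n$: it constructs $A\in\mathbb Z[X]$ of degree at most $n$ (so the relevant polynomial is essentially $A(X^n)$, of degree about $n^2$) with most coefficients below $H(\kappa)^{2(n-\sqrt n)}$ and $0<|A(\kappa^n)|\le \sqrt2\,H(\kappa)^{-(n-1)(n-\sqrt n)}$, via Minkowski's linear forms theorem (Mahler's $p$-adic analogue in Lemma \ref{lem:padic}); nonvanishing is then forced because $A(\kappa^n)=0$ would make the minimal polynomial of $\kappa^n$ divide $A$, giving $M(A)\ge H(\kappa^n)^{[\mathbb Q(\kappa^n):\mathbb Q]}\ge H(\kappa)^{2n}$ (the degree of $\kappa^n$ is at least $2$ since $\kappa$ is on the unit circle and not a root of unity), contradicting $M(A)\le |A|_1\le 6nH(\kappa)^{2(n-\sqrt n)}$. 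The growth $h(\kappa^n)=nh(\kappa)$ is precisely what creates room between ``coefficients small enough to make the value tiny'' and ``coefficients too small to permit vanishing''; no such room exists at $\kappa$ itself, which is the tension you correctly identify but do not resolve. Note also that this construction only yields $\tfrac1{D_n}\log|Q_n(\kappa)|\le -(1-O(n^{-1/2}))h(\kappa)$, i.e.\ $o(1)$ slack rather than a fixed $\varepsilon_0$; that still suffices for the $\limsup$, but your selection step must be run with vanishing slack.

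Two steps of that selection step also need repair. First, the per-factor lower bound is not a uniform constant $-C$: your own resultant computation gives $v_{n,k}\ge -c(\kappa)-d\,h(\beta_k)$ with $h(\beta_k)=m(\psi_{n,k})/\deg\psi_{n,k}$, which need not be $O(1)$ (a Cauchy bound only gives roots of modulus up to $1+\|Q_n\|_\infty$, and in any construction of this type the coefficients are exponential in $\sqrt{D_n}$); the fix is to control the discarded factors in aggregate via $\sum_k m(\psi_{n,k})=m(Q_n)=o(D_n)$, not one factor at a time. Second, and more seriously, your strictness argument rests on $\tfrac1{\varphi(m)}\log|\Phi_m(\kappa)|\to 0$ ``by equidistribution''; equidistribution alone cannot control the primitive $m$-th root of unity nearest to $\kappa$ (this is a logarithmic singularity --- the very phenomenon the paper is about), and the known proofs of that limit use a Baker-type lower bound on $|\kappa-\zeta|$. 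You also give no mechanism by which a pigeonhole-produced $Q_n$ ``avoids'' the finitely many bad $\Phi_m$, and this matters: a cyclotomic factor of high multiplicity has Mahler measure $0$, so it is compatible with $m(Q_n)=o(D_n)$ and could be exactly the factor carrying the smallness at $\kappa$, making your $\alpha_n$ a fixed root of unity infinitely often. The paper sidesteps both issues at once: it forms $R_n=X^{n^2}P+qA(X^n)+\delta qP$ and applies a generalized Eisenstein criterion (Lemma \ref{lem:Eis}) to extract a single irreducible factor $T_n$ of degree at least $n^2$, so irreducibility, strictness, and the absence of parasitic factors come for free, leaving only the complementary factor $S_n$ of degree at most $d$ to bound from below at $\kappa$ via the product formula.
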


Putting all the results together we find that $\mathcal{K}_\infty=\{0\}\cup\mu_{\infty}$.

\paragraph{The $p$-adic analogue.}
Let $p$ be a prime and $\overline{\mathbb Q}\subseteq \overline{\mathbb Q_p}$ be an algebraic closure of the rationals, $\alpha$ and $\kappa$ algebraic numbers which are not conjugated to each other. We want to understand for which $\kappa$ the following limit holds. \begin{equation}\label{eq:gen}\lim_{\substack{h(\alpha)\to 0\\ [\mathbb Q(\alpha):\mathbb Q]\to \infty}}\frac{1}{[\mathbb Q(\alpha):\mathbb Q]}\sum_{\sigma:\mathbb Q(\alpha)\to \overline {\mathbb Q_p}} \log |\sigma(\alpha)-\kappa|_p = \log\max\{1,|\kappa|_p\}.\end{equation}
 It turns out that Theorems \ref{thm:BM} and \ref{thm:kappa} also hold in this setting, if we replace the integral by $\log\max\{1,|\kappa|\}$.
\begin{customthm}{\ref{thm:BM}'}
\label{thm:BMgen}Let $0\ne \kappa\in\overline{\mathbb Q}\subseteq\overline{\mathbb Q_p}$ be such that $|\kappa|_p \ne 1$. Then there
exists a strict sequence of algebraic numbers $(\alpha_n)_{n\in\mathbb N}$ such that no $\alpha_n$ is a conjugate of $\kappa$, $\lim_{n\to\infty}h(\alpha_n)= 0$ and $$\lim_{n\to \infty}\left|\frac{1}{[\mathbb Q(\alpha_n):\mathbb Q]}\sum_{\sigma:\mathbb Q(\alpha_n)\to \overline{\mathbb Q_p}} \log|\sigma(\alpha_n)-\kappa|_p - \log\max\{1,|\kappa|_p\}\right|= | \log |\kappa|_p|> 0.$$\end{customthm}

\begin{customthm}{\ref{thm:kappa}'}\label{thm:kappaGen}Let $\kappa\in\overline{\mathbb Q}\setminus\mu_\infty$ such that $|\kappa|_p=1$ and $d=[\mathbb Q(\kappa):\mathbb Q]$. Then there exists a strict sequence of algebraic numbers $(\alpha_n)_{n\in\mathbb N}$ such that no $\alpha_n$ is a conjugate of $\kappa$, $\lim_{n\to\infty}h(\alpha_n)= 0$,$$\limsup_{n\to\infty}\frac{1}{[\mathbb Q(\alpha_n):\mathbb Q]}\sum_{\sigma: \mathbb Q(\alpha_n)\to\overline{\mathbb Q_p}}\log|\sigma(\alpha_n)-\kappa|_p\le -h(\kappa)<0\text{ and }$$
$$\liminf_{n\to\infty}\frac{1}{[\mathbb Q(\alpha_n):\mathbb Q]}\sum_{\sigma: \mathbb Q(\alpha_n)\to\overline{\mathbb Q_p}}\log|\sigma(\alpha_n)-\kappa|_p\ge -d(h(\kappa)+\log(2)).$$\end{customthm}
Let us define $\mathcal{K}_p = \{\kappa\in\overline{\mathbb Q}: \eqref{eq:gen}\text{ holds}\}$.
Then Theorems \ref{thm:BMgen} and \ref{thm:kappaGen} imply that $\mathcal K_p\subseteq \{0\}\cup\mu_\infty.$ Using that $1\in\mathcal K_\infty$ one can show that the last inclusion is an equality. For $\kappa=0$ one uses that for integers $k\in\mathbb Z\setminus \{0\}$ we have $1/|k|\le |k|_p\le 1$ to bound the average also in this case by the height of $\alpha$.
\begin{proposition}\label{pr:K}For all $\nu\in M_{\mathbb Q}$ we have $\mathcal{K}_\nu =\{0\}\cup\mu_\infty$.\end{proposition}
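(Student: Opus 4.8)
The plan is to obtain the Proposition by assembling the results above, treating the inclusions $\mathcal{K}_\nu\subseteq\{0\}\cup\mu_\infty$ and $\{0\}\cup\mu_\infty\subseteq\mathcal{K}_\nu$ separately; for the second one, the only non-routine case, $\mu_\infty\subseteq\mathcal{K}_p$, will be reduced by means of the product formula to the archimedean input $1\in\mathcal{K}_\infty$. For the first inclusion, let $\kappa\in\mathcal{K}_\nu$ with $\kappa\ne 0$. If $|\kappa|_\nu\ne 1$, then Theorem~\ref{thm:BM} (for $\nu=\infty$), respectively Theorem~\ref{thm:BMgen} (for $\nu$ finite), provides a strict sequence $(\alpha_n)$ with $h(\alpha_n)\to 0$ along which the average in \eqref{eq:lim}, respectively \eqref{eq:gen}, stays at distance $|\log|\kappa|_\nu|>0$ from $\log\max\{1,|\kappa|_\nu\}$; this contradicts $\kappa\in\mathcal{K}_\nu$, so $|\kappa|_\nu=1$. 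If in addition $\kappa\notin\mu_\infty$, then $h(\kappa)>0$ by Kronecker's theorem, and Theorem~\ref{thm:kappa}, respectively Theorem~\ref{thm:kappaGen}, provides a strict sequence along which the $\limsup$ of the average is at most $-h(\kappa)<0=\log\max\{1,|\kappa|_\nu\}$, again contradicting $\kappa\in\mathcal{K}_\nu$. Hence $\kappa\in\mu_\infty$.

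For the reverse inclusion I would dispose of the two routine cases first. That $0\in\mathcal{K}_\nu$ is the elementary computation mentioned in the introduction: if $P_\alpha=a_dX^d+\dots+a_0$ is the $\mathbb{Z}$-minimal polynomial of $\alpha\ne 0$, then $\prod_{\sigma:\mathbb{Q}(\alpha)\to\overline{\mathbb{Q}_\nu}}\sigma(\alpha)=\pm a_0/a_d$, so the average in \eqref{eq:gen} equals $\frac1d\log|a_0/a_d|_\nu$, which has absolute value at most $h(\alpha)$ because $1\le|a_0|,|a_d|\le M(P_\alpha)$ at the archimedean place and $1/|a_i|\le|a_i|_\nu\le 1$ at the finite ones; thus it tends to $0=\log\max\{1,|0|_\nu\}$. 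For $\nu=\infty$, the inclusion $\mu_\infty\subseteq\mathcal{K}_\infty$ is Breuillard's Lemma~6.5~\cite{Br}, which gives $1\in\mathcal{K}_\infty$, together with its quantitative refinement Corollary~\ref{cor:muInf}; it is also contained in \cite{CLT}.

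It remains to prove $\mu_\infty\subseteq\mathcal{K}_p$ for every prime $p$. For an algebraic number $\beta$ not conjugate to $0$ and a place $\nu$, set $b_\nu^{(\beta)}(\gamma)=\frac{1}{[\mathbb{Q}(\gamma):\mathbb{Q}]}\sum_{\sigma:\mathbb{Q}(\gamma)\to\overline{\mathbb{Q}_\nu}}\log|\sigma(\gamma)-\beta|_\nu$ and $h_\nu(\gamma)=\frac{1}{[\mathbb{Q}(\gamma):\mathbb{Q}]}\sum_{\sigma:\mathbb{Q}(\gamma)\to\overline{\mathbb{Q}_\nu}}\log\max\{1,|\sigma(\gamma)|_\nu\}$, so that $\sum_\nu h_\nu(\gamma)=h(\gamma)$. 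I use two facts. \emph{(i)} If $|\beta|_\nu\le 1$ at a finite place $\nu$, then the ultrametric inequality gives $\log|\sigma(\gamma)-\beta|_\nu\le\log\max\{1,|\sigma(\gamma)|_\nu\}$ for every $\sigma$, hence $b_\nu^{(\beta)}(\gamma)\le h_\nu(\gamma)\le h(\gamma)$. \emph{(ii)} For every $\gamma$ not conjugate to $1$ one has $\sum_\nu b_\nu^{(1)}(\gamma)=0$: indeed $\prod_{\sigma:\mathbb{Q}(\gamma)\to\overline{\mathbb{Q}_\nu}}(\sigma(\gamma)-1)=\pm P_\gamma(1)/c$, where $P_\gamma$ is the $\mathbb{Z}$-minimal polynomial of $\gamma$ and $c$ its leading coefficient, so $b_\nu^{(1)}(\gamma)=\frac{1}{[\mathbb{Q}(\gamma):\mathbb{Q}]}(\log|P_\gamma(1)|_\nu-\log|c|_\nu)$, and summation over $\nu$ annihilates both terms by the product formula for the nonzero integers $P_\gamma(1)$ and $c$. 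Now fix any sequence $(\alpha_n)$ with $h(\alpha_n)\to 0$ and $[\mathbb{Q}(\alpha_n):\mathbb{Q}]\to\infty$; it is automatically strict, and since its degrees tend to infinity it meets any prescribed finite set only finitely often, so at each step below we tacitly discard those finitely many terms for which the relevant expression is undefined. Taking $\gamma=\alpha_n$ in (ii) and bounding the contributions of the finite places $q\ne p$ with (i),
\[
h(\alpha_n)\ \ge\ b_p^{(1)}(\alpha_n)\ =\ -\,b_\infty^{(1)}(\alpha_n)-\sum_{q\ne p}b_q^{(1)}(\alpha_n)\ \ge\ -\,b_\infty^{(1)}(\alpha_n)-h(\alpha_n);
\]
since $1\in\mathcal{K}_\infty$ forces $b_\infty^{(1)}(\alpha_n)\to 0$, we conclude $b_p^{(1)}(\alpha_n)\to 0$, i.e.\ $1\in\mathcal{K}_p$. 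Finally, if $\kappa$ is a root of unity of order $k$, then $\prod_{\zeta\in\mu_k}(X-\zeta)=X^k-1$ yields $\sum_{\zeta\in\mu_k}b_p^{(\zeta)}(\alpha_n)=b_p^{(1)}(\alpha_n^k)$; as $h(\alpha_n^k)=k\,h(\alpha_n)\to 0$ and $[\mathbb{Q}(\alpha_n^k):\mathbb{Q}]\ge[\mathbb{Q}(\alpha_n):\mathbb{Q}]/k\to\infty$, the case $\kappa=1$ just established gives $\sum_{\zeta\in\mu_k}b_p^{(\zeta)}(\alpha_n)\to 0$, while each of the $k$ summands is at most $h_p(\alpha_n)\to 0$ by (i); a squeeze then shows that every summand, in particular $b_p^{(\kappa)}(\alpha_n)$, tends to $0=\log\max\{1,|\kappa|_p\}$. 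Hence $\kappa\in\mathcal{K}_p$, and the proof is complete.

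The one step that is not mere bookkeeping is recognising fact (ii); after that, the finite places $q\ne p$ cause no trouble because each $b_q^{(1)}$ is dominated by the local height $h_q$ and $\sum_q h_q(\alpha_n)\le h(\alpha_n)$ is small. The passage from $\kappa=1$ to arbitrary roots of unity is forced upon us: fact (ii) is available only when the number subtracted is rational, which is exactly why one first raises $\alpha_n$ to the $k$-th power.
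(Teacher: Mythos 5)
Your proof is correct, and its skeleton is the same as the paper's: the inclusion $\mathcal{K}_\nu\subseteq\{0\}\cup\mu_\infty$ by citing Theorems \ref{thm:BM}, \ref{thm:BMgen}, \ref{thm:kappa}, \ref{thm:kappaGen}, the case $\kappa=0$ by the elementary constant/leading-coefficient estimate, $\mu_\infty\subseteq\mathcal{K}_\infty$ via Breuillard or Corollary \ref{cor:muInf}, and the reduction of a root of unity of order $k$ to $\kappa=1$ through $X^k-1=\prod_{\zeta\in\mu_k}(X-\zeta)$ plus a squeeze against the per-term bound by the height. Where you genuinely diverge is the step $\mu_\infty\subseteq\mathcal{K}_p$: the paper proves quantitative pointwise estimates in the appendix (Lemma \ref{lem:upper}, Lemma \ref{lem:1}, Theorem \ref{thm:padicRoots}), transferring the $p$-adic average at $1$ to the archimedean place via $|k|_p\ge|k|^{-1}$ for nonzero integers $k$ and then invoking the explicit bound of Corollary 1.8 in \cite{BM}; you instead package the transfer as the identity $\sum_\nu b_\nu^{(1)}(\gamma)=0$ (product formula applied to $P_\gamma(1)$ and the leading coefficient), dominate the finite places $q\ne p$ by the local heights $h_q$ with $\sum_q h_q\le h$, and then use the qualitative black box $1\in\mathcal{K}_\infty$ along the given sequence. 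The two mechanisms are at bottom the same product-formula transfer (your fact (ii) is the global form of the paper's $|k|_p\ge|k|^{-1}$, and your fact (i) is Lemma \ref{lem:upper}), but the trade-off is real: your argument is softer and needs only the qualitative archimedean input, whereas the paper's route produces an explicit rate (Theorem \ref{thm:padicRoots}) of independent interest. Your bookkeeping — discarding the finitely many $\alpha_n\in\mu_k$, and passing to $(\alpha_n^k)$ using $h(\alpha_n^k)=kh(\alpha_n)$ and $[\mathbb{Q}(\alpha_n^k):\mathbb{Q}]\ge[\mathbb{Q}(\alpha_n):\mathbb{Q}]/k$ — is sound and matches what the paper does.
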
 

\paragraph{Proof strategy for Theorem \ref{thm:kappaGen}.} Let $\nu\in M_{\mathbb Q},\alpha\in\overline{\mathbb Q}\subseteq\overline{\mathbb Q_\nu}$ and $T\in \mathbb Z[X]$ be its $\mathbb Z$-minimal polynomial with leading coefficient $t$ and degree $d$. Then $$\frac{1}{d}\sum_{\sigma: \mathbb Q(\alpha)\to\overline{\mathbb Q_\nu}}\log|\sigma(\alpha)-\kappa|_\nu=\frac{1}{d}\log \left(\prod_{\sigma:\mathbb Q(\alpha)\to\overline{\mathbb Q_\nu}}\left|\sigma(\alpha)-\kappa\right|_\nu\right)= \frac{1}{d}\log|T(\kappa)/t|_\nu.$$ The goal is therefore to construct a polynomial $T$ which is irreducible, takes a very small but nonzero value at $\kappa$ and such that its roots have small height.
The main idea is to use Minkowski's Linear Forms Theorem and its generalization of Mahler to construct a polynomial $A$ with controlled coefficients that has small but non-zero value at $\kappa^n$ for a parameter $n\ge 1$. Nonvanishing is achieved by bounding the Mahler measure of $A$ from above and below. For the upper bound we use the control on the coefficients of $A$. For the lower bound we use the height property $H(\kappa^n)=H(\kappa)^n$.  We deduce that if $A$ would vanish on $\kappa^n$, then its Mahler measure is bounded from below by a power of the multiplicative height of $\kappa$.

To get an irreducible polynomial, we modify $A$ by multiplying it by some fixed prime and adding some multiples of the minimal polynomial of $\kappa$ such that we can apply a generalized version of Eisenstein's criterion to ensure that there is an irreducible factor $T$ of large degree. To see that $T(\kappa)$ is still very small, we have to bound the value at $\kappa$ of the other factor from below. The proof uses the product formula.

\paragraph{Organisation of the article} In the next section we prove inequalities between the Mahler measure of $P$ and its $l_1$-norm, estimate from below the value of a polynomial at $\kappa$ which does not vanish and finally we give a generalisation of Eisenstein's irreducibility criterion which will allow us to find an irreducible factor of large degree.

The proof of Theorem \ref{thm:BMgen} is the same as the proof for Theorem 1.12 in \cite{BM}. Then we prove Theorem \ref{thm:kappaGen}, where for the construction of $A$ we handle the archimedian and the $p$-adic case separately. In the appendix we prove that the limit \eqref{eq:gen} holds if $\kappa$ is a root of unity and again distinguish the archimedian and the $p$-adic case. In the archimedian setting we use a quantitative version of Bilu's theorem by R. Baker and D. Masser.

\paragraph{Acknowledgements} I want to thank R. Baker and D. Masser for asking this nice question and Philipp Habegger for his advice and giving me the hint to use Schmidt's result to get a polynomial which takes a very small value at $\kappa$. This is part of my PhD thesis. Further, I gratefully acknowledge support from the
Swiss National Science Foundation grant “Diophantine Equations: Special Points,
Integrality, and Beyond” (n\textsuperscript{o}  200020\_184623).

\section{Preparation}

\begin{definition}We denote by $\mathbb N$ the natural numbers $\{1, 2, 3,\dots\}$ and by $|\cdot|:\mathbb C\to \mathbb R$ the complex absolute value. The $n$-th roots of unity are denoted by $\mu_n=\{z\in\overline{\mathbb Q}: z^n=1\}$ and $\mu_\infty=\bigcup_{n\ge 1}\mu_n$ is the set of all roots of unity. Here $\overline{\mathbb Q}$ is any algebraic closure of the rationals.

For a prime $p$ the $p$-adic numbers $\mathbb Q_p$ are the completion of $\mathbb Q$ with respect to the $p$-adic absolute value $|\cdot|_p$, which extends to $\mathbb Q_p$. The $p$-adic integers are given by $\mathbb Z_p=\{x\in\mathbb Q_p : |x|_p \le 1\}.$

We use the same symbol $\overline{\mathbb Q}$ to denote an algebraic closure of $\mathbb Q$ in $\overline{\mathbb Q_\nu}$ for some $\nu\in M_{\mathbb Q}$. If $\nu$ is not specified, $\overline{\mathbb Q}$ denotes the algebraic numbers in $\mathbb C$. 

We denote by $M_K^0$ the finite and by $M_K^\infty$ the infinite places of a number field $K$. 

Let $P=a_dX^d+\dots+a_0 \in \mathbb Z[X]\setminus\{0\}$ be a polynomial of degree $d$. Then we define the $l_1$-norm $|P|_1=|a_d|+\dots+|a_0|$ and $|0|_1=0$. \end{definition}

\begin{lemma}\label{lem:ineqMM}Let $P\in \mathbb Z[X]$ be of degree $d\ge 0$. Then we have $M(P)\le |P|_1 \le 2^{d}M(P).$\end{lemma}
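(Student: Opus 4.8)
The plan is to pass between the coefficients of $P$ and its complex roots via Jensen's formula, which the paper has already recalled: writing $\alpha_1,\dots,\alpha_d\in\mathbb C$ for the roots of $P=a_dX^d+\dots+a_0$ (with multiplicity), we have $M(P)=|a_d|\prod_{i=1}^d\max\{1,|\alpha_i|\}$. Before starting I would dispose of the degenerate case $d=0$: there $P=a_0$, $M(P)=|a_0|=|P|_1$, and both inequalities are equalities.

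For the lower bound $M(P)\le|P|_1$, I would invoke the integral form of the AM--GM inequality (concavity of $\log$): $M(P)=\exp\bigl(\int_0^1\log|P(e^{2\pi it})|\,dt\bigr)\le\int_0^1|P(e^{2\pi it})|\,dt$, and then bound the integrand pointwise by the triangle inequality, $|P(e^{2\pi it})|\le\sum_{j=0}^d|a_j||e^{2\pi ijt}|=|P|_1$, so the integral is at most $|P|_1$.

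For the upper bound $|P|_1\le 2^dM(P)$, I would write each coefficient in terms of the elementary symmetric functions of the roots: $a_{d-k}=(-1)^k a_d\,e_k(\alpha_1,\dots,\alpha_d)$ for $0\le k\le d$. Since $e_k$ is a sum of $\binom{d}{k}$ monomials, each a product of $k$ distinct roots, the triangle inequality and the bound $\prod_{i\in S}|\alpha_i|\le\prod_{i=1}^d\max\{1,|\alpha_i|\}$ for any $S$ give $|e_k(\alpha_1,\dots,\alpha_d)|\le\binom{d}{k}\prod_{i=1}^d\max\{1,|\alpha_i|\}$. Hence $|a_{d-k}|\le\binom{d}{k}|a_d|\prod_{i=1}^d\max\{1,|\alpha_i|\}=\binom{d}{k}M(P)$, and summing over $k=0,\dots,d$ together with $\sum_{k=0}^d\binom{d}{k}=2^d$ yields $|P|_1\le 2^dM(P)$.

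The argument is essentially routine and I do not expect a genuine obstacle; the only points requiring a little care are applying Jensen's formula with the leading coefficient correctly included in the normalization, and keeping track of the constant-polynomial case so that the statement "$d\ge 0$" is covered.
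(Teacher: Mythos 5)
Your proposal is correct and follows essentially the same route as the paper: the paper cites Lemma 1.6.7 of Bombieri--Gubler for the bound $M(P)\le|P|_1$ and for the coefficient estimate $|a_{d-r}|\le\binom{d}{r}M(P)$, and then sums the latter exactly as you do. You simply make the cited ingredients self-contained (Jensen's inequality for the first bound, Vieta's formulas and the triangle inequality for the coefficient estimate), which matches the standard proofs behind the citation.
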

\begin{proof}The first inequality is proved in Lemma 1.6.7 in \cite{BG}. In the proof of this lemma, it is shown that the coefficients $a_i$ of $P$ satisfy $$|a_{d-r}|\le \binom{d}{r} M(P)\text{ for all }r=0,\dots, d.$$ Summing up these inequalities we get $$|P|_1=\sum_{r=0}^d|a_{d-r}|\le M(P)\sum_{r=0}^d\binom{d}{r}=2^dM(P).\qedhere$$\end{proof}

\begin{lemma}\label{lem:Imge12}Let $u\in\mathbb C\setminus\{\pm 1\}$ be on the unit circle. Then there are infinitely many $n\in\mathbb N$ such that the imaginary part of $u^n$ is at least $1/2$.\end{lemma}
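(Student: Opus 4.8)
The plan is to use the fact that $u = e^{2\pi i\theta}$ with $\theta$ irrational, so that the sequence $(n\theta \bmod 1)_{n\ge 1}$ is dense in $[0,1)$ — indeed equidistributed by Weyl, though density suffices. If instead $u = e^{2\pi i\theta}$ with $\theta$ rational, say $\theta = a/q$ in lowest terms, then $u$ is a primitive $q$-th root of unity; since $u \ne \pm 1$ we have $q \ge 3$, and the powers $u^n$ cycle through all $q$-th roots of unity infinitely often. First I would handle the rational case: among the $q$-th roots of unity with $q\ge 3$ there is at least one, namely $e^{2\pi i/q}$ with $3\le q$, and more robustly one checks that for $q\ge 5$ there are points with imaginary part $\ge 1/2$ (e.g. those with argument in $[\pi/6, 5\pi/6]$), while the small cases $q\in\{3,4\}$ are verified directly: $e^{2\pi i/3}$ has imaginary part $\sqrt3/2 \ge 1/2$ and $e^{\pi i/2} = i$ has imaginary part $1$. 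Each such root is hit for infinitely many $n$ (namely an arithmetic progression of $n$), giving the claim.

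For the irrational case I would argue as follows. Write $u = e^{2\pi i \theta}$ with $\theta \in \mathbb R \setminus \mathbb Q$. The imaginary part of $u^n$ is $\sin(2\pi n\theta)$, which is $\ge 1/2$ precisely when $n\theta \bmod 1$ lies in the arc $[\tfrac{1}{12}, \tfrac{5}{12}]$, an interval of positive length. By the classical equidistribution (or just density) of $(n\theta \bmod 1)_{n\ge 1}$ for irrational $\theta$ — which follows from the fact that the orbit is infinite, since $u$ is not a root of unity, and that any infinite subset of the compact group $\mathbb R/\mathbb Z$ has a limit point, forcing the orbit closure to be a nontrivial closed subgroup, hence all of $\mathbb R/\mathbb Z$ — there are infinitely many $n$ with $n\theta \bmod 1 \in [\tfrac{1}{12}, \tfrac{5}{12}]$, and for each such $n$ the imaginary part of $u^n$ is at least $1/2$.

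Alternatively, and perhaps more cleanly so as not to separate the two cases, I would simply invoke that $u$ is not a root of unity \emph{or} $\pm 1$: in either situation the set $\{u^n : n \ge 1\}$ is infinite (for $\pm 1$ it is finite, but those are excluded; for $u\ne\pm 1$ of finite order it is finite but of size $\ge 3$). When the orbit is infinite it is dense in the unit circle by the subgroup argument above; when it is finite of size $q \ge 3$ one checks directly that it meets the arc $\{z : \operatorname{Im} z \ge 1/2\}$. In both cases infinitely many $n$ work. The only genuinely delicate point is confirming that a finite subgroup of the circle of order $\ge 3$ really does contain a point of imaginary part $\ge 1/2$; this is the step I would be most careful to get right, and it reduces to the elementary observation that consecutive $q$-th roots of unity are at angular distance $2\pi/q \le 2\pi/3$ apart, so some power lands in the arc of arguments $[\pi/6, 5\pi/6]$, which has length $2\pi/3$.
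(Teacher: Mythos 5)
Your proposal is correct, but it is organized differently from the paper's proof. You split into the case where $u$ is a root of unity (handled via the equal spacing of $\mu_q$: consecutive $q$-th roots are $2\pi/q\le 2\pi/3$ apart, so some element has argument in $[\pi/6,5\pi/6]$ and is hit along an arithmetic progression of exponents) and the case where $u$ is not a root of unity, which you settle by citing Kronecker/Weyl density of $(n\theta \bmod 1)$ for irrational $\theta$. The paper instead gives one self-contained pigeonhole argument: if two powers $u^n,u^m$ are within angular distance $0<\delta\le\pi/3$, then with $l=m-n$ the powers $u^k,u^{k+l},\dots,u^{k+Nl}$ sweep the circle in steps of $\delta$ and one of them lands in the target sector, for every starting $k$; looking at $1,u,\dots,u^5$ shows this hypothesis holds unless $u$ is a root of unity of order $3$, $4$ or $5$, which are then treated directly via B\'ezout. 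So the paper's route covers non-roots of unity and roots of unity of order $\ge 6$ uniformly without invoking any equidistribution theorem, while yours outsources the key density statement to a classical cited result; both are legitimate, and your arc $[\pi/6,5\pi/6]$ is in fact the exact characterization of $\Im\ge 1/2$ (the paper works with the smaller sector $[\pi/3,2\pi/3]$, which suffices). One minor imprecision in your sketch of the density argument: the set $\{n\theta \bmod 1: n\ge 1\}$ is only a sub-semigroup, so ``the orbit closure is a closed subgroup'' requires the (true but not immediate) fact that a closed subsemigroup of a compact group is a group, and ``nontrivial'' should read ``infinite'', since $\mathbb R/\mathbb Z$ has nontrivial finite closed subgroups; as you also note that Weyl equidistribution suffices, this does not affect correctness, but if you want a self-contained writeup the paper's two-close-powers pigeonhole is the cleaner way to justify density-type behaviour.
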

\begin{proof}Let $\arg(z)\in[0,2\pi)$ be the argument of $z$ and define the angle distance $$\mathrm{dist}(z_1, z_2)=\min\{|\arg(z_1)-\arg(z_2)|, |\arg(z_1)-\arg(z_2)\pm 2\pi|\},$$ a distance between two points $z_1, z_2$ on the unit circle. The condition $\Im(\kappa^n)\ge 1/2$ is equivalent to $\arg(\kappa^n)\in [\pi/3, 2\pi/3]$. Suppose that there are $n<m$ such that $0<\delta=\mathrm{dist}(u^n,u^m)\le\pi/3$, and let $N=\lceil 2\pi/\delta\rceil.$ Let $l=m-n$ and $k\in\mathbb N$. Then by rotation invariance of the angle distance function, $u^k, u^{k+l},\dots, u^{k+Nl}$ have the property that the angle between two consecutives is $\delta$ and since there are at least $2\pi/\delta$ of them, there is at least one of them in any sector of angle at least $\delta$. In particular, one of them has imaginary part at least $1/2$. Since $k$ was arbitrary, this shows that under the assumption we made, the conclusion of the lemma holds. 

Now consider $1, u, \dots, u^5$. If all are different, the smallest angle distance between two of them is at most $\pi/3$ and the assumption is satisfied. Thus, we are left with roots of unity of order bounded by five. So let $\zeta\in\mu_\infty$ be of order $a\in \{3,4,5\}$. Then there exists $b$ coprime to $a$ such that $\zeta=e^{2\pi i b/a}$. By Bézout's theorem, there exist $u,v\in\mathbb Z$ such that $au+bv=1$. Then for all $k\in\mathbb N$ we have $\zeta^{ak+v}= e^{2\pi i bv/a} = e^{2\pi i/a}$ whose imaginary part is at least $1/2$.\end{proof}

\begin{lemma}\label{lem:PatKappa}Let $\kappa\in \overline{\mathbb Q}, K=\mathbb Q(\kappa)$ and $d=[K:\mathbb Q]$. Then for every $P\in\mathbb Z[X]$ such that $P(\kappa)\ne 0$ and a place $\nu_0$ of $K$ we have $|P(\kappa)|_{\nu_0}\ge H(\kappa)^{-d\deg(P)}|P|_1^{-d}$.\end{lemma}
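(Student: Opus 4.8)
The plan is to apply the product formula for the number field $K=\mathbb Q(\kappa)$ to the nonzero element $P(\kappa)$. Fix for each place $\nu\in M_K$ the normalisation of $|\cdot|_\nu$ extending the corresponding absolute value of $\mathbb Q$, and write $d_\nu=[K_\nu:\mathbb Q_\nu]$ for the local degree, so that $\sum_{\nu\mid v}d_\nu=[K:\mathbb Q]=d$ for every $v\in M_{\mathbb Q}$ and $\prod_{\nu\in M_K}|x|_\nu^{d_\nu}=1$ for all $x\in K\setminus\{0\}$. Applying the latter to $x=P(\kappa)$ gives
\[ |P(\kappa)|_{\nu_0}^{\,d_{\nu_0}}=\prod_{\nu\ne\nu_0}|P(\kappa)|_\nu^{-d_\nu}, \]
so it suffices to bound the product $\prod_{\nu\ne\nu_0}|P(\kappa)|_\nu^{d_\nu}$ from above by $H(\kappa)^{d\deg P}|P|_1^{\,d}$.

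The next step is a place-by-place estimate of $|P(\kappa)|_\nu$. Write $P=\sum_{i=0}^{\deg P}a_iX^i$ with $a_i\in\mathbb Z$. At a finite place $\nu\in M_K^0$, the ultrametric inequality and $|a_i|_\nu\le 1$ give $|P(\kappa)|_\nu\le\max_{0\le i\le\deg P}|\kappa|_\nu^{\,i}\le\max\{1,|\kappa|_\nu\}^{\deg P}$. At an archimedean place $\nu\in M_K^\infty$, the triangle inequality gives $|P(\kappa)|_\nu\le\sum_{i=0}^{\deg P}|a_i|\,|\kappa|_\nu^{\,i}\le|P|_1\max\{1,|\kappa|_\nu\}^{\deg P}$. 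Every one of these upper bounds is $\ge 1$, so enlarging the product over $\nu\ne\nu_0$ to a product over all $\nu\in M_K$ only weakens it, giving
\[ \prod_{\nu\ne\nu_0}|P(\kappa)|_\nu^{d_\nu}\ \le\ \Bigl(\prod_{\nu\in M_K^\infty}|P|_1^{\,d_\nu}\Bigr)\prod_{\nu\in M_K}\max\{1,|\kappa|_\nu\}^{d_\nu\deg P}. \]
Since $\sum_{\nu\in M_K^\infty}d_\nu=d$, the first factor equals $|P|_1^{\,d}$; and since $\prod_{\nu\in M_K}\max\{1,|\kappa|_\nu\}^{d_\nu}=H(\kappa)^{d}$ by the standard formula for the absolute Weil height as a sum of local contributions, the second factor equals $H(\kappa)^{d\deg P}$. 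Combined with the product-formula identity above, this gives $|P(\kappa)|_{\nu_0}^{\,d_{\nu_0}}\ge H(\kappa)^{-d\deg P}|P|_1^{-d}$.

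Finally I would remove the exponent $d_{\nu_0}$. As $P\ne 0$ lies in $\mathbb Z[X]$ we have $|P|_1\ge 1$, and $H(\kappa)\ge 1$, so $c:=H(\kappa)^{-d\deg P}|P|_1^{-d}\in(0,1]$; since $1/d_{\nu_0}\le 1$ this forces $c^{1/d_{\nu_0}}\ge c$, whence $|P(\kappa)|_{\nu_0}=\bigl(|P(\kappa)|_{\nu_0}^{\,d_{\nu_0}}\bigr)^{1/d_{\nu_0}}\ge c^{1/d_{\nu_0}}\ge c$, which is the claimed bound. I do not anticipate a genuine obstacle here: this is a Liouville-type argument, and the only delicate points are to keep one consistent normalisation of the $|\cdot|_\nu$ (and of the local degrees) for both the product formula and the height formula, and to check that the final root extraction goes the favourable direction — which it does precisely because the right-hand side is at most $1$.
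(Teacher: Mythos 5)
Your argument is correct and is essentially the paper's own proof: both apply the product formula in $K$ to $P(\kappa)\ne 0$, bound $|P(\kappa)|_\nu$ place by place (by $|P|_1\max\{1,|\kappa|_\nu\}^{\deg P}$ at archimedean places and $\max\{1,|\kappa|_\nu\}^{\deg P}$ at finite ones), and then discard the exponent $d_{\nu_0}$ using $|P|_1\ge 1$ and $H(\kappa)\ge 1$. Your ``enlarge the product to all places'' step is just a repackaging of the paper's trivial extra bound at $\nu_0$, so there is no substantive difference.
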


\begin{proof}Let $D=\deg(P), K=\mathbb Q(\kappa)$ and $M_K$ its set of places. For every place $\nu\in M_K$ let $d_\nu=[K_\nu:\mathbb Q_\nu]$, where $K_\nu$ is the completion with respect to $\nu$. If $\nu|p$ we normalize $|\cdot|_\nu$ such that $|p|_\nu = 1/p$. Proposition 1.4.4 in \cite{BG} states that the product formula holds: For all $x\in K\setminus\{0\}$ we have $$\prod_{\nu\in M_K}|x|_\nu^{d_\nu}=1.$$ 

For all infinite places we can bound $|P(\kappa)|_\nu\le |P|_1\max\{1,|\kappa)|_\nu\}^D$ and for every finite place we have $|P(\kappa)|_\nu\le \max\{1,|\kappa|_\nu\}^D$. For $\nu_0$ we will use a different bound. If $\nu_0\in M_K^\infty$ we can bound $|P(\kappa)|_{\nu_0}\le |P(\kappa)|_{\nu_0}|P|_1\max\{1,|\kappa|_{\nu_0}\}^{D}$. If $\nu_0\in M_K^0$ we have $|P(\kappa)|_{\nu_0}\le |P(\kappa)|_{\nu_0}\max\{1,|\kappa|_{\nu_0}\}^{D}$. Then by the product formula and since $P(\kappa)\ne 0$ we have $$1=\prod_{\nu\in M_K} |P(\kappa)|_\nu^{d_\nu}\le |P(\kappa)|_{\nu_0}^{d_{\nu_0}}|P|_1^{d}\prod_{\nu\in M_K}\max\{1,|\kappa|_\nu\}^{d_\nu D}=|P(\kappa)|_{\nu_0}^{d_{\nu_0}}|P|_1^{d}H(\kappa)^{dD}.$$ Now divide by $|P|_1^{d}H(\kappa)^{dD}$ and take the $d_{\nu_0}$-th root  to get $|P(\kappa)|_{\nu_0}\ge |P|_1^{-d/d_{\nu_0}}H(\kappa)^{-dD/d_{\nu_0}}.$ Since $|P|_1$ and $H(\kappa)$ are at least one, we can conclude.\end{proof}

Next we come to a generalization of the Eisenstein criterion. We obtain the classical theorem
if $e = \deg(R)$.

\begin{lemma}\label{lem:Eis}Let p be a prime number, let $e \ge 1$ be an integer, and suppose that $R, S \in
\mathbb Z[X]$ are polynomials such that $p^2 \nmid R(0)$ and their reductions in $\mathbb F_p[X]$ satisfy $\overline R\ne 0$ and $\overline R = X^e\overline S$. Then $R$ is divisible
by an irreducible polynomial in $\mathbb Z[X]$ of degree at least $e$.\end{lemma}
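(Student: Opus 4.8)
The plan is to mimic the classical Gauss-lemma proof of Eisenstein's criterion. Suppose for contradiction that every irreducible factor of $R$ in $\mathbb Z[X]$ has degree at most $e-1$. Since $\overline R\ne 0$ we have $p\nmid$ (leading coefficient of $R$) is false in general, but we do know $\deg\overline R\le\deg R$ and, more importantly, the factorization we build will take place degree-by-degree. Write $R=c\,R_1\cdots R_k$ with $c\in\mathbb Z$ and each $R_i\in\mathbb Z[X]$ irreducible (primitive, non-constant); here I absorb the content into $c$, and note $p\nmid c$ is \emph{not} assumed — but $p^2\nmid R(0)$ forces $p\nmid c$ once we see $p\mid R(0)$ only to the first order. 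Actually the cleaner route: reduce mod $p$ to get $\overline R=\bar c\,\overline{R_1}\cdots\overline{R_k}$ in $\mathbb F_p[X]$, and since $\overline R=X^e\overline S$ with $\overline R\ne0$, unique factorization in $\mathbb F_p[X]$ shows each $\overline{R_i}$ is (a unit times) a power of $X$, i.e. $\overline{R_i}=u_i X^{e_i}$ with $\sum e_i=e$ (and $\bar c\ne 0$, so $p\nmid c$). If all $R_i$ had degree $\le e-1$, that alone is not yet a contradiction, so we need the $p^2\nmid R(0)$ hypothesis.

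The key step is to show that exactly one factor, say $R_1$, has $e_1\ge 1$ while all others have $e_i=0$, i.e. $p\mid R_i(0)$ for at most one $i$. Indeed $R(0)=c\prod_i R_i(0)$, and $p\mid R_i(0)$ iff $X\mid \overline{R_i}$ iff $e_i\ge 1$. If two distinct factors had $e_i\ge1$, then $p^2\mid R_i(0)R_j(0)\mid R(0)$, contradicting $p^2\nmid R(0)$. Hence all of the power of $X$ in $\overline R$ is concentrated in a single $\overline{R_i}$: we get $e_1=e$ and $e_i=0$ for $i\ge 2$. But $e_1\le\deg\overline{R_1}\le\deg R_1\le e-1<e$, a contradiction. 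Therefore $R$ has an irreducible factor of degree $\ge e$. (One must also handle the degenerate case $R(0)=0$: then $p^2\nmid R(0)$ fails since $p^2\mid 0$, so this case does not arise; similarly the hypothesis rules out $c$ being divisible by $p$.)

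The main obstacle — really the only subtle point — is bookkeeping the content: one must make sure that when splitting $R=c\prod R_i$ the $R_i$ can be taken primitive and that reduction mod $p$ genuinely respects this factorization, which is exactly Gauss's lemma, together with the observation that $p\nmid c$ (forced by $p^2\nmid R(0)$ combined with $X^e\mid\overline R$, $e\ge1$, so $p\mid R(0)$ but not $p^2$, hence $p\nmid c$). Once that is in place, the argument is just counting the multiplicity of the prime $X$ in $\mathbb F_p[X]$ across the factors and comparing with degrees. The statement that we recover classical Eisenstein when $e=\deg R$ is immediate: then $\overline R=uX^{\deg R}$ forces the found factor to have degree exactly $\deg R$, i.e. $R$ is (a constant times) irreducible.
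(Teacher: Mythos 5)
Your strategy is exactly the paper's: factor $R$ into irreducibles over $\mathbb Z$, reduce modulo $p$, and use $p^2\nmid R(0)$ to show that at most one irreducible factor can have constant term divisible by $p$, so that this single factor must carry the whole power $X^e$ and hence has degree at least $e$. That core argument is correct, but one intermediate assertion is false as stated: from $\overline R=X^e\overline S$ you claim that unique factorization in $\mathbb F_p[X]$ forces each $\overline{R_i}$ to be a unit times a power of $X$, with $\sum_i e_i=e$. Since $S$ is an arbitrary polynomial, $\overline S$ need not be constant, so the $\overline{R_i}$ may contain further irreducible factors (for example $R=(X+p)(X^2+X+1)$ with $e=1$ has $\overline{R_2}=X^2+X+1$, not a monomial), and $X$ may even divide $\overline S$, in which case the total $X$-multiplicity of $\overline R$ exceeds $e$. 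Fortunately your argument never uses the full strength of that claim: redefine $e_i$ as the multiplicity of $X$ in $\overline{R_i}$ (so $e_i\ge 1$ if and only if $p\mid R_i(0)$); then $\overline R=\bar c\,\overline{R_1}\cdots\overline{R_k}\ne 0$ together with $X^e\mid\overline R$ gives $\sum_i e_i\ge e$, your $p^2\nmid R(0)$ step shows at most one $e_i$ is positive, hence some $e_1\ge e$, and $e\le e_1\le\deg\overline{R_1}\le\deg R_1$ produces the contradiction exactly as you wrote; the conclusion $e_1=e$ is not needed, only $e_1\ge e$. With that repair your proof coincides with the paper's, which factors $R=P_1\cdots P_g$ into $\mathbb Z[X]$-irreducibles, permutes so that $X\mid\overline{P_1}$, and derives $p^2\mid R(0)$ if $\deg\overline{P_1}<e$. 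A small side remark: $p\nmid c$ follows immediately from $\overline R\ne 0$ (if $p\mid c$ then every coefficient of $R$ is divisible by $p$), not from the $p^2\nmid R(0)$ hypothesis as your parenthetical suggests; in any case this fact is not essential to the argument.
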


\begin{proof}We factor $R = P_1\cdots P_g$ with $P_1,\dots , P_g$ irreducible in $\mathbb Z[X]$. Then $\overline R =
X^e\overline S = \overline{P_1} \cdots \overline{P_g}$ is nonzero in $\mathbb{F}_p[X]$, the bar indicates reducing coefficients modulo
$p$. As $e \ge 1$ we have $X | \overline{P_1}$ after permuting the $P_i$.
Now suppose $\deg(\overline{P_1}) < e,$ then $X^e \nmid \overline{P_1}$. In this case, there exists $i \in \{2, . . . , g\}$
with $X | \overline{P_i}$. So $p | P_1(0)$ and $p| P_i(0)$, which implies $p^2 | P_1(0) \cdots P_g (0) = R(0)$
and thus contradicts the hypothesis on $R$.
So we have $\deg \overline{P_1} \ge e$ and in particular $\deg P_1 \ge e.$\end{proof}
This is a generalization of the ``Eisenstein criterion". We obtain the classical theorem if $e=\deg(R)$.
\begin{lemma}\label{lem:intLincomb} Let $x\in\overline{\mathbb Q_p}$ be such that $|x|_p\le1$ and let $d=[\mathbb Q_p(x):\mathbb Q_p]$. Then for every $n\in\mathbb N_0$ there exist $\lambda_0,\dots, \lambda_{d-1}\in\mathbb Z_p$ such that $x^n=\lambda_0+\dots+\lambda_{d-1}x^{d-1}$.\end{lemma}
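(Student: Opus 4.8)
The plan is to reduce the whole lemma to one structural fact — that the hypothesis $|x|_p\le 1$ forces the minimal polynomial of $x$ over $\mathbb Q_p$ to lie in $\mathbb Z_p[X]$ — and then to run a one-line induction on $n$.

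First I would use that the $p$-adic absolute value extends \emph{uniquely} from $\mathbb Q_p$ to $\overline{\mathbb Q_p}$. Let $f(X)=X^d+c_{d-1}X^{d-1}+\cdots+c_0\in\mathbb Q_p[X]$ be the monic minimal polynomial of $x$; note $\deg f=[\mathbb Q_p(x):\mathbb Q_p]=d$, so $1,x,\dots,x^{d-1}$ is indeed the right list. Since $\mathrm{char}\,\mathbb Q_p=0$, the roots of $f$ in $\overline{\mathbb Q_p}$ are exactly the $\sigma(x)$, where $\sigma$ runs over the $d$ distinct $\mathbb Q_p$-embeddings of $\mathbb Q_p(x)$. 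For each such $\sigma$ the map $y\mapsto|\sigma(y)|_p$ is an absolute value on $\mathbb Q_p(x)$ restricting to $|\cdot|_p$ on $\mathbb Q_p$, so by uniqueness $|\sigma(x)|_p=|x|_p\le 1$. As each $c_i$ is, up to sign, an elementary symmetric function of the $\sigma(x)$, the ultrametric inequality gives $|c_i|_p\le 1$; and $c_i\in\mathbb Q_p$ with $\mathbb Z_p=\{y\in\mathbb Q_p:|y|_p\le 1\}$, whence $f\in\mathbb Z_p[X]$. (Equivalently: $x$ lies in the valuation ring of $\overline{\mathbb Q_p}$, which is the integral closure of the integrally closed ring $\mathbb Z_p$, so the minimal polynomial of $x$ over $\mathrm{Frac}(\mathbb Z_p)=\mathbb Q_p$ has coefficients in $\mathbb Z_p$.)

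From $f(x)=0$ I read off $x^d=-c_{d-1}x^{d-1}-\cdots-c_0$, which already expresses $x^d$ as a $\mathbb Z_p$-linear combination of $1,x,\dots,x^{d-1}$. I then induct on $n$: for $0\le n\le d-1$ the claim is trivial, and $n=d$ is the displayed relation. For $n>d$, writing $x^{n-1}=\sum_{i=0}^{d-1}\mu_i x^i$ with $\mu_i\in\mathbb Z_p$ by the inductive hypothesis, I multiply by $x$ and substitute the relation for $x^d$ to get $x^n=\sum_{i=0}^{d-2}\mu_i x^{i+1}-\mu_{d-1}\sum_{j=0}^{d-1}c_j x^j$, whose coefficients again lie in $\mathbb Z_p$ because $\mathbb Z_p$ is a ring. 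This closes the induction and proves the lemma.

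There is no genuinely hard step. The only point requiring a moment's care is the reduction in the second paragraph — that $|x|_p\le 1$ really does confine the $\mathbb Q_p$-minimal polynomial to $\mathbb Z_p[X]$ — and even that rests only on the uniqueness of the extension of $|\cdot|_p$ to $\overline{\mathbb Q_p}$ (so that all conjugates of $x$ inherit its absolute value) together with the ultrametric inequality; everything after that is bookkeeping.
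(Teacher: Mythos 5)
Your proof is correct and follows essentially the same route as the paper: establish that the $\mathbb Q_p$-minimal polynomial of $x$ has coefficients in $\mathbb Z_p$ (via uniqueness of the extension of $|\cdot|_p$, equality of the absolute values of the conjugates, and the ultrametric inequality applied to the elementary symmetric functions), then induct on $n$ by multiplying by $x$ and substituting the relation for $x^d$. Your treatment of the conjugates via embeddings is, if anything, slightly more careful than the paper's wording, but the argument is the same.
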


\begin{proof}Note that if $d=1$, then $x^n=\lambda_0\in\mathbb Z_p$. So we may assume that $d\ge 2$.
If $n<d$ we can choose $\lambda_i=\delta_{n,i}$. Let us do the remaining cases by induction on $n$, starting with $n=d$. Let $X^d-a_{d-1}X^{d-1}-\dots-a_0=\prod_{i=1}^d(X-x_i)$ be the $\mathbb Q_p$-minimal polynomial of $x$, where the $x_i$ are the $\mathbb Q_p$-conjugates of $x$. Since there is a unique extension of the absolute value to $\mathbb Q_p(x)$, we have $|x_1|_p=\dots = |x_d|_p=|x|_p\le1$. Note that the $a_i$ are up to sign given by the elementary symmetric polynomials in the $x_i$. By the ultrametric inequality we therefore have that $|a_i|_p\le 1$ for all $i=0,\dots,d-1$. Thus $x^d=a_0+\dots+a_{d-1}x^{d-1}$ and the $a_i$ are integers.

Assume that for some $n\ge d$ we have $x^n=b_0+\dots+b_{d-1}x^{d-1}$ for some $b_i\in\mathbb Z_p$. Then we have \begin{align*}x^{n+1}&=x(b_0+\dots+b_{d-1}x^{d-1})=b_0x+\dots+b_{d-2}x^{d-1}+b_{d-1}(a_0+\dots+a_{d-1}x^{d-1})\\
&=a_0b_{d-1}+(a_1b_{d-1}+b_0)x+\dots+(a_{d-1}b_{d-1}+b_{d-2})x.\end{align*} Let $\lambda_0=a_0b_{d-1}$ and $\lambda_i=a_{i}b_{d-1}+b_{i-1}, i=1,\dots, d-1$. Since $\mathbb Z_p$ is a ring, we have $\lambda_i\in\mathbb Z_p$ for all $i=0,\dots, d-1$.
\end{proof}
\begin{lemma}\label{lem:bdHQ}Let $Q\in\mathbb Z[X]$ be of degree $m\ge 0$ and $\alpha\in\overline{\mathbb Q}$. Then $h(Q(\alpha))\le \log|Q|_1 + mh(\alpha)$.\end{lemma}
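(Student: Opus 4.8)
The plan is to compute $h(Q(\alpha))$ through the standard local decomposition of the Weil height and estimate each local factor separately. Fix $K=\mathbb Q(\alpha)$; since $Q(\alpha)\in K$, one has
\[
h(Q(\alpha))=\frac{1}{[K:\mathbb Q]}\sum_{\nu\in M_K} d_\nu\,\log\max\{1,|Q(\alpha)|_\nu\},
\]
with $M_K$, the local degrees $d_\nu$ and the normalisation of $|\cdot|_\nu$ exactly as in the proof of Lemma~\ref{lem:PatKappa}. We may assume $Q\ne 0$ (otherwise it has no degree), so that $|Q|_1\ge 1$ because the coefficients are rational integers.

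Next I would bound $|Q(\alpha)|_\nu$ place by place, writing $Q=\sum_{i=0}^m q_iX^i$. For an archimedean place $\nu$ the triangle inequality gives $|Q(\alpha)|_\nu\le\sum_i|q_i|\,|\alpha|_\nu^i\le|Q|_1\max\{1,|\alpha|_\nu\}^m$, and since the right-hand side is $\ge 1$ this yields $\log\max\{1,|Q(\alpha)|_\nu\}\le\log|Q|_1+m\log\max\{1,|\alpha|_\nu\}$. For a finite place $\nu$, the ultrametric inequality together with $|q_i|_\nu\le 1$ gives $|Q(\alpha)|_\nu\le\max_i|q_i|_\nu|\alpha|_\nu^i\le\max\{1,|\alpha|_\nu\}^m$, hence $\log\max\{1,|Q(\alpha)|_\nu\}\le m\log\max\{1,|\alpha|_\nu\}$.

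Finally I would sum these bounds weighted by $d_\nu/[K:\mathbb Q]$. The contribution of $m\log\max\{1,|\alpha|_\nu\}$ over all $\nu\in M_K$ is precisely $m\,h(\alpha)$. The term $\log|Q|_1$ occurs only at the archimedean places, and $\sum_{\nu\in M_K^\infty}d_\nu=[K:\mathbb Q]$ (since $K\otimes_{\mathbb Q}\mathbb R\cong\prod_{\nu\mid\infty}K_\nu$), so it contributes exactly $\log|Q|_1$. Adding up gives $h(Q(\alpha))\le\log|Q|_1+m\,h(\alpha)$, as claimed. There is no genuine obstacle here: the argument is a routine application of the defining sum for the height together with the two standard estimates for polynomial values at archimedean and non-archimedean places. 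The only mildly delicate point is the bookkeeping in the last step — one must observe that $\log|Q|_1$ enters with total weight $1$ (summing the archimedean local degrees) rather than being counted over all places, which would overshoot.
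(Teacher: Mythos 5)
Your proof is correct and follows essentially the same route as the paper's own argument: decompose the height of $Q(\alpha)$ over the places of $\mathbb Q(\alpha)$, bound $|Q(\alpha)|_\nu$ by $|Q|_1\max\{1,|\alpha|_\nu\}^m$ at archimedean places and by $\max\{1,|\alpha|_\nu\}^m$ at finite places, then use $\sum_{\nu\mid\infty}d_\nu=[K:\mathbb Q]$. Your explicit remark that $|Q|_1\ge 1$ (so the $\max$ with $1$ can be dropped) is a small point the paper leaves implicit, but the argument is otherwise identical.
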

\begin{proof}Let $K=\mathbb Q(\alpha), d=[K:\mathbb Q], d_\nu$ and $|\cdot|_\nu$ as in the proof of Lemma \ref{lem:PatKappa} for all $\nu\in M_K$. Then for every $x\in K$ we have $$h(x)=\frac{1}{d}\sum_{\nu\in M_K} d_\nu \log\max \{1,|x|_\nu\}.$$
If $\nu\in M_K^\infty$, we have $$|Q(\alpha)|_\nu\le \sum_{i=0}^m |q_i|_\nu|\alpha|_\nu^i\le \sum_{i=0}^m |q_i|\max\{1,|\alpha|_\nu\}^m\le |Q|_1\max\{1,|\alpha|_\nu\}^m.$$ For a finite place $\nu\in M_K^0$ we can bound $$|Q(\alpha)|_\nu\le \max_{i=0,\dots, m}\{|q_i|_\nu|\alpha|_\nu^i\}\le \max\{1,|\alpha|_\nu\}^m.$$ 
Using $\sum_{\nu\in M_K^\infty} d_\nu = d$ we conclude that \begin{align*}h(Q(\alpha))&=\frac{1}{d}\sum_{\nu\in M_K} d_\nu \log\max \{1,|Q(\alpha)|_\nu\}\\
&\le\frac{1}{d}\sum_{\nu\in M_K^\infty} d_\nu \log\left(|Q|_1\max\{1,|\alpha|_\nu\}^m\right)+\frac{1}{d}\sum_{\nu\in M_K^0} d_\nu \log(\max \{1,|\alpha|_\nu\}^m)\\
&\le\frac{1}{d}\sum_{\nu\in M_K^\infty} d_\nu \log|Q|_1+\frac{m}{d}\sum_{\nu\in M_K} d_\nu \log\max \{1,|\alpha|_\nu\}\le |Q|_1 + mh(\alpha).\qedhere\end{align*}\end{proof}

\section{Proof of Theorem \ref{thm:BMgen}.}
The proof is the same as the proof of Theorem 1.12 \cite{BM} up to some minor changes. Suppose first that $|\kappa|_p>1$ and let $Q=q_mX^m+\dots + q_0\in\mathbb Z[X]$ with $q_0>0$ such that $Q(\kappa)=0$. Fix a prime $l$ not dividing $q_m$. Let $A_n(X)=X^nQ(X)-\frac{1}{l}$ and let $\alpha_n\in\overline{\mathbb Q}\subseteq\overline{\mathbb Q_p}$ be a root of it. The polynomial $A_n$ has degree $m+n$ and is irreducible by Eisenstein's criterion on $lX^{n+m}A_n(1/X)$. We have $0=A_n(\alpha_n)=\alpha_n^nQ(\alpha_n)-1/l$ and therefore $\alpha_n^{-n}=lQ(\alpha_n)$. Thus, taking the height and using Lemma \ref{lem:bdHQ} and $h(\alpha)=h(\alpha^{-1})$ we find $$nh(\alpha_n)=nh(\alpha_n^{-1})=h(\alpha_n^{-n})=h(lQ(\alpha_n))\le\log|lQ|_1+mh(\alpha_n),$$
which implies that $\lim_{n\to\infty} h(\alpha_n) = 0$. 

Since $Q(\kappa)=0$, we have $A_n(\kappa)=1/l$ and hence $$\frac{1}{n+m}\sum_{\sigma:\mathbb Q(\alpha_n)\to \overline{\mathbb Q_p}} \log|\sigma(\alpha_n)-\kappa|_p = \frac{1}{n+m}\log|A_n(\kappa)/q_m|_p = \frac{-\log|lq_m|_p}{n+m}$$ which tends to $0$ as $n\to \infty.$ As $\log\max\{1,|\kappa|_p\} = \log|\kappa|_p$ we find
$$\lim_{n\to \infty}\left|\frac{1}{[\mathbb Q(\alpha_n):\mathbb Q]}\sum_{\sigma:\mathbb Q(\alpha_n)\to \overline{\mathbb Q_p}} \log|\sigma(\alpha_n)-\kappa|_p - \log\max\{1,|\kappa|_p\}\right|= | \log |\kappa|_p|>0.$$

If $|\kappa|_p<1$, we rename $\kappa$ as $\kappa'$ and use the above calculations for $\kappa = 1/\kappa'$ which satisfies $|\kappa|_p > 1.$ Note that $$\sum_{\mathbb Q(\alpha_n)\to\overline{\mathbb Q_p}} (\log|\sigma(\alpha_n)-\kappa|_p-\log|\sigma(\alpha_n)|_p)= \log|A_n(\kappa)/p_m|_p- \log|A_n(0)/p_m|_p=0$$ since $A_n(\kappa)=A_n(0)=1/l$. For $\alpha_n'=1/\alpha_n$ we have $\sigma(\alpha_n')-\kappa'=\frac{\kappa-\sigma(\alpha_n)}{\sigma(\alpha_n)\kappa}$ and hence 
\begin{align*}\frac{1}{[\mathbb Q(\alpha_n):\mathbb Q]}\sum_{\sigma:\mathbb Q(\alpha_n')\to \overline{\mathbb Q_p}} \log|\sigma(\alpha_n')-\kappa'|_p &= \frac{1}{n+m}\sum_{\mathbb Q(\alpha_n)\to\overline{\mathbb Q_p}} (\log|\sigma(\alpha)-\kappa|_p-\log|\sigma(\alpha_n)|_p-\log|\kappa|_p)\\
&=-\log |\kappa|_p = \log|\kappa'|_p\end{align*} Since $\log\max\{1,|\kappa'|_p\}=0$, we find that the claimed limit holds also in this case.\qed

\section{Proof of Theorems \ref{thm:kappa} and \ref{thm:kappaGen}.}
The proof is organized in three steps. First we construct polynomials $A$ with bounded coefficients and degree which takes a small value at $\kappa$ first in the archemedian and then in the $p$-adic case. Finally, we use this polynomials to construct the sequence we are looking for in a uniform way.

\begin{lemma}\label{lem:arch}Let $\kappa\in\overline{\mathbb Q}\setminus\mu_\infty$ be such that $|\kappa|=1$. Then there exists an increasing sequence $(n_k)_{k\in\mathbb N}$ of natural numbers such that for every $k\in \mathbb N$ there exists $A_k\in\mathbb Z[X]$ of degree at most $n_k$ such that $0<|A_k(\kappa^{n_k})|\le \sqrt 2H(\kappa)^{-(n_k-1)(n_k-\sqrt{n_k})}$ and $|A_k|_1\le 6n_kH(\kappa)^{2n_k}$.\end{lemma}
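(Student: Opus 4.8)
The plan is to apply Minkowski's Linear Forms Theorem (or, with explicit constants, the version of Schmidt alluded to in the acknowledgements) to produce, for a suitable large $n$, a polynomial $A\in\mathbb Z[X]$ of degree at most $n$ whose value at $\kappa^n$ is extremely small in absolute value, and then to exclude $A(\kappa^n)=0$ by squeezing the Mahler measure $M(A)$ between an upper bound coming from the size of the coefficients and a lower bound coming from $h(\kappa^n)=nh(\kappa)$.

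Concretely, write $\kappa^n=\cos\theta_n+i\sin\theta_n$ and, with unknown integers $c_0,\dots,c_n$ and $A(X)=\sum_{i}c_iX^i$, consider the $n+1$ real linear forms $L_0=\sum_{i=0}^n c_i\cos(i\theta_n)=\Re A(\kappa^n)$, $L_1=\sum_{i=0}^n c_i\sin(i\theta_n)=\Im A(\kappa^n)$ and $L_j=c_j$ for $2\le j\le n$. Expanding along the $n-1$ coordinate forms, the determinant of this system is $\sin\theta_n=\Im(\kappa^n)$, which is nonzero because $\kappa\notin\mu_\infty$. I would set $H=H(\kappa)$, $\varepsilon=H^{-(n-1)(n-\sqrt n)}$ and $C=H^{2(n-\sqrt n)}$, so that $\varepsilon^2C^{\,n-1}=1\ge|\Im(\kappa^n)|$, and obtain from Minkowski a nonzero integer vector with $|L_0|,|L_1|\le\varepsilon$ and $|c_j|\le C$ for $2\le j\le n$. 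The first two bounds give $|A(\kappa^n)|=\sqrt{L_0^2+L_1^2}\le\sqrt2\,\varepsilon$, which is the claimed estimate once nonvanishing is known. To bound $|A|_1$ I still need $c_0$ and $c_1$, which the theorem does not control directly; I would recover them by solving the equation for $L_1$ for $c_1$ and then the equation for $L_0$ for $c_0$. This is exactly where Lemma~\ref{lem:Imge12} enters, applied to $u=\kappa$: restricting $n$ to the infinite set of integers with $\Im(\kappa^n)\ge1/2$ makes the division by $\sin\theta_n$ harmless, and a short computation yields $|A|_1\le 5\varepsilon+6(n-1)C\le 6nC\le 6nH^{2n}$ (using $\varepsilon\le 1\le C$ and $n-\sqrt n\le n$).

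For the nonvanishing, suppose $A(\kappa^n)=0$. Since $A\in\mathbb Z[X]\setminus\{0\}$ and the $\mathbb Z$-minimal polynomial $P$ of $\kappa^n$ is primitive, Gauss's lemma gives $P\mid A$ in $\mathbb Z[X]$, hence $M(A)=M(P)M(A/P)\ge M(P)$ because the cofactor has integer coefficients. Now $|\kappa|=1$ and $\kappa\notin\mu_\infty$ force $\deg(\kappa^n)\ge 2$ (a rational number on the unit circle is $\pm1$, and $\kappa^n=\pm1$ would make $\kappa$ a root of unity), so by $h(\kappa^n)=nh(\kappa)$ we get $M(P)=H(\kappa^n)^{\deg(\kappa^n)}=H^{\,n\deg(\kappa^n)}\ge H^{2n}$. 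On the other hand Lemma~\ref{lem:ineqMM} gives $M(A)\le|A|_1\le 6nH^{2(n-\sqrt n)}$. Combining, $H^{2\sqrt n}\le 6n$, which is false for all sufficiently large $n$ because $H>1$ by Kronecker's theorem. Thus there is $N_0=N_0(\kappa)$ with $A(\kappa^n)\ne0$ whenever $n\ge N_0$ and $\Im(\kappa^n)\ge1/2$; letting $(n_k)$ enumerate such $n$ in increasing order and taking $A_k$ to be the polynomial built above for $n=n_k$ completes the construction.

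The main obstacle is matching up the exponents: the coefficient bound has to be small enough that the lower bound $M(A)\ge H^{2n}$ for a hypothetical zero is genuinely violated, while $|A(\kappa^n)|$ stays of the prescribed size. Using the exponent $n-\sqrt n$ in place of $n$ is precisely what creates the slack, turning the comparison into $H^{2\sqrt n}$ against the polynomial factor $6n$ (which also absorbs the harmless $2^{\deg A}$ loss in Lemma~\ref{lem:ineqMM}); since $\sqrt n\gg\log n$ this wins for large $n$. The only other technicality is that Minkowski's theorem does not bound $c_0,c_1$, which is exactly why the lower bound $\Im(\kappa^n)\ge1/2$ from Lemma~\ref{lem:Imge12} is needed.
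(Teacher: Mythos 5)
Your proposal is correct and follows essentially the same route as the paper: the same Minkowski-type linear forms setup (coordinate forms for $c_2,\dots,c_n$ together with $\Re A(\kappa^n)$ and $\Im A(\kappa^n)$, determinant $\Im(\kappa^n)$), the same parameters $C=H(\kappa)^{2(n-\sqrt n)}$ and $\varepsilon\approx H(\kappa)^{-(n-1)(n-\sqrt n)}$, the same use of Lemma \ref{lem:Imge12} to recover $c_0,c_1$ and bound $|A|_1\le 6nC$, and the same Mahler-measure squeeze via $[\mathbb Q(\kappa^n):\mathbb Q]\ge 2$ and $H(\kappa^n)=H(\kappa)^n$ to rule out $A(\kappa^n)=0$ for large $n$. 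One immaterial slip: only the inequality $M(A)\le|A|_1$ from Lemma \ref{lem:ineqMM} is needed, so there is no ``$2^{\deg A}$ loss'' to absorb.
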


\begin{proof}Using linearity in rows of the determinant it is easy to deduce the following theorem from Theorem 2C in \cite{Schmidt}: \begin{theorem}\label{thm:MLF} Let $m\in\mathbb N, B\in\mathrm{GL}_m(\mathbb R)$ and $\lambda_1,\dots, \lambda_m>0$ such that $|\det(B)|=\lambda_1\cdots\lambda_m$. Then there exists $\mathbf a\in\mathbb Z^m\setminus\{0\}$ such that $\mathbf b = B\mathbf a$ satisfies $|b_i|< \lambda_i$ for every $1\le i \le m-1$ and $|b_m|\le \lambda_m$.\end{theorem}

Since $\kappa$ is not a root of unity and lies on the unit circle, there are infinitely many $n\in\mathbb N$ such that $\Im(\kappa^n)\ge 1/2$ by Lemma \ref{lem:Imge12}. So let us fix such an $n$. For $0\le i\le n$ let $x_i$ and $y_i$ be the real and imaginary part of $\kappa^{ni}$. In particular, we have $y_1\ge 1/2$. Let us define \begin{align*}B&=\begin{pmatrix}
1 & 0 & \dots &0 & 0 &0\\
0 & \ddots & \ddots & \vdots & \vdots & \vdots\\
\vdots & \ddots& \ddots & 0 & \vdots &\vdots\\
0& \dots& 0 &1 &0 &0\\
y_n &\dots &\dots &y_2 & y_1& 0\\
x_n &\dots & \dots & x_2 & x_1&  1 \end{pmatrix}\in\mathrm{Mat}_{n+1}(\mathbb R),\\
C_{n}&=H(\kappa)^{2(n-\sqrt n)} \text{ and }\epsilon=(y_1C_{n}^{1-n})^{1/2}.\end{align*} Since $\kappa$ is not a root of unity, nor zero we have $H(\kappa)>1$ by Kronecker's theorem. Therefore, $C_n\ge1$ and $\epsilon\le 1$. The determinant of $B$ equals $y_1$. In particular, we have $|\det(B)|=\epsilon^2C_{n}^{n-1}$. Using Theorem \ref{thm:MLF} above with $\lambda_1=\dots=\lambda_{n-1}=C_{n}$ and $\lambda_n=\lambda_{n+1}=\epsilon$ we find a nonzero vector $\mathbf a= (a_n,\dots,a_0)^\top\in\mathbb Z^{n+1}\setminus\{0\}$ and corresponding bounds for $B\mathbf a$. Let $A=a_nX^n+\dots +a_1 X +a_0$ in $\mathbb Z[X]$, then we have $|A(\kappa^{n})|< \sqrt 2\epsilon$ and $|a_i|< C_{n}$ for every $i=2,\dots, n$.

Next we want to show, that $A(\kappa^{n})\ne 0$. The idea is to bound the Mahler measure $M(A)$ from above and below. Note that $|a_1x_1+a_0|=|\Re(A(\kappa^{n}))-a_nx_n-\dots-a_2x_2|\le \epsilon+(n-1)C_{n}\le nC_{n}$ and similarly $|a_1y_1|\le nC_{n}$. Thus, $|a_1|\le nC_{n}/y_1\le 2nC_{n}$ and $|a_0|=|(a_1x_1+a_0)-a_1x_1|\le 3nC_{n}$. Using Lemma \ref{lem:ineqMM} we get the upper bound \begin{equation}\label{eq:upperArch}M(A)\le |A|_1\le (n-1)C_{n}+2nC_{n}+3nC_{n}\le 6nC_{n}.\end{equation}

Assume by contradiction that $A(\kappa^{n})=0$. Let $Q\in\mathbb Z[X]$ be the $\mathbb Z$-minimal polynomial of $\kappa^{n}$. Since $Q$ is irreducible, it divides $A$ in $\mathbb Z[X]$ and therefore we find $M(Q)\le M(A)$. Note that $[\mathbb Q(\kappa^{n}):\mathbb Q]\ge 2$ since otherwise $\kappa^{n}$ is a rational number on the unit circle - which means $\pm1$ - and hence $\kappa$ would be a root of unity. Using properties of the multiplicative height we find $$M(Q)^{1/2}\ge M(Q)^{1/[\mathbb Q(\kappa^{n}):\mathbb Q]}= H(\kappa^{n})=H(\kappa)^{n}.$$ This implies the second lower bound \begin{equation}\label{eq:lowerArch}M(A)\ge M(Q) \ge H(\kappa)^{2n}= H(\kappa)^{2\sqrt n}C_{n}.\end{equation} 
Comparing the bounds \eqref{eq:upperArch} and \eqref{eq:lowerArch} we find $H(\kappa)^{2\sqrt n}\le 6n$, which does not hold if $n$ is large enough. This shows that $A(\kappa^{n})\ne 0$.

Clearly, we have $|A|_1\le 6nC_{n}\le 6nH(\kappa)^{2n}$. We can also bound $$|A(\kappa^n)|\le\sqrt 2\epsilon = \sqrt {2y_1}H(\kappa)^{(1-n)(n-\sqrt n)}.$$ Since this construction works for all $n$ large enough coming from Lemma \ref{lem:Imge12}, we can take them as the sequence.
\end{proof}
We now prove the $p$-adic analogue of Lemma \ref{lem:arch}.

\begin{lemma}\label{lem:padic}Let $p$ be a prime and $\overline{\mathbb Q}\subseteq \overline{\mathbb Q_p}$ an algebraic closure of the rationals. Let $\kappa\in\overline{\mathbb Q}\setminus\mu_\infty$ such that $|\kappa|_p\le 1$. Then for all $n$ large enough in terms of $\kappa$ there exists $A\in\mathbb Z[X]$ of degree at most $n$ such that $0<|A(\kappa^{n})|_p\le pH(\kappa)^{-n(n-\sqrt{n})}$ and $|A|_1\le (n+1)H(\kappa)^{Dn}$, where $D=[\mathbb Q(\kappa):\mathbb Q]$. \end{lemma}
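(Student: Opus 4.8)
Here is how I would prove Lemma~\ref{lem:padic}. The plan is to imitate the proof of Lemma~\ref{lem:arch}, replacing the real linear forms theorem (Theorem~\ref{thm:MLF}) by a $p$-adic pigeonhole argument and using Lemma~\ref{lem:intLincomb} to make $A(\kappa^n)$ small $p$-adically. Set $m=[\mathbb Q_p(\kappa^n):\mathbb Q_p]$. Since $\kappa^n\in\mathbb Q(\kappa)$, the $\mathbb Q_p$-minimal polynomial of $\kappa^n$ divides the $\mathbb Q$-minimal polynomial $Q$ of $\kappa^n$, so $m\le d':=[\mathbb Q(\kappa^n):\mathbb Q]$ and $d'\mid D$, hence $m\le D$. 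Because $|\kappa^n|_p=|\kappa|_p^n\le 1$, Lemma~\ref{lem:intLincomb} applied to $x=\kappa^n$ yields, for every $0\le i\le n$, an expansion $\kappa^{ni}=\sum_{j=0}^{m-1}\lambda_{i,j}(\kappa^n)^j$ with all $\lambda_{i,j}\in\mathbb Z_p$. Thus for any $A=\sum_{i=0}^n a_iX^i\in\mathbb Z[X]$ we get $A(\kappa^n)=\sum_{j=0}^{m-1}\big(\sum_{i=0}^n a_i\lambda_{i,j}\big)(\kappa^n)^j$, so if each of the $m$ sums $\sum_i a_i\lambda_{i,j}$ lies in $p^N\mathbb Z_p$, then the ultrametric inequality together with $|(\kappa^n)^j|_p\le 1$ gives $|A(\kappa^n)|_p\le p^{-N}$.

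Next, choose $N$ minimal with $p^N\ge H(\kappa)^{n(n-\sqrt n)}$ and put $R=\lceil p^{Nm/(n+1)}\rceil$. Among the $(R+1)^{n+1}$ tuples $(a_0,\dots,a_n)\in\{0,\dots,R\}^{n+1}$ two have the same image under $\mathbf a\mapsto\big(\sum_i a_i\lambda_{i,j}\bmod p^N\big)_{j<m}\in(\mathbb Z_p/p^N\mathbb Z_p)^m$, since $(R+1)^{n+1}>R^{n+1}\ge p^{Nm}=|(\mathbb Z_p/p^N\mathbb Z_p)^m|$; their difference is a nonzero $A\in\mathbb Z[X]$ of degree $\le n$ with $|A|_1\le(n+1)R$ and $|A(\kappa^n)|_p\le p^{-N}\le H(\kappa)^{-n(n-\sqrt n)}\le pH(\kappa)^{-n(n-\sqrt n)}$. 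Minimality of $N$ gives $p^N<pH(\kappa)^{n(n-\sqrt n)}$, hence $R< pH(\kappa)^{mn(n-\sqrt n)/(n+1)}+1\le pH(\kappa)^{Dn(n-\sqrt n)/(n+1)}+1$ using $m\le D$ and $H(\kappa)>1$ (Kronecker; if $\kappa=0$ take $A=1$ and everything is trivial). Since $Dn-\tfrac{Dn(n-\sqrt n)}{n+1}=\tfrac{Dn(\sqrt n+1)}{n+1}\to\infty$, the factor $H(\kappa)^{Dn-Dn(n-\sqrt n)/(n+1)}$ eventually exceeds $2p$, so $R< H(\kappa)^{Dn}$ and therefore $|A|_1\le(n+1)H(\kappa)^{Dn}$ once $n$ is large.

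The main obstacle is to show $A(\kappa^n)\ne 0$, and as in Lemma~\ref{lem:arch} this is done by sandwiching the Mahler measure of $A$; the delicate point is that one must play the local degree $m$ off against the global degree $d'$. If $A(\kappa^n)=0$, then $Q\mid A$ in $\mathbb Z[X]$ (as $Q$ is primitive and irreducible), so by multiplicativity of the Mahler measure and Lemma~\ref{lem:ineqMM}, $H(\kappa)^{nd'}=H(\kappa^n)^{d'}=M(Q)\le M(A)\le|A|_1\le(n+1)R< 2p(n+1)H(\kappa)^{mn(n-\sqrt n)/(n+1)}$, hence $H(\kappa)^{\,nd'-mn(n-\sqrt n)/(n+1)}<2p(n+1)$. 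But $m\le d'$ and $d'\ge 1$ give $nd'-\tfrac{mn(n-\sqrt n)}{n+1}\ge d'n\cdot\tfrac{\sqrt n+1}{n+1}\ge\tfrac{n\sqrt n+n}{n+1}\to\infty$, so the left-hand side tends to infinity as $n\to\infty$, a contradiction. Hence $A(\kappa^n)\ne 0$ for $n$ large. Choosing $n$ large enough in terms of $\kappa$ to validate all three estimates (the bound on $|A(\kappa^n)|_p$, the bound on $|A|_1$, and non-vanishing) finishes the proof; the only genuinely delicate input is the inequality $m\le d'$, which keeps the exponent of $H(\kappa)$ in our bound for $|A|_1$ strictly below the exponent $nd'$ forced by $M(Q)$, with a gap growing at least like $\sqrt n$.
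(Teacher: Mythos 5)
Your proof is correct, and its skeleton matches the paper's: expand $\kappa^{ni}$ as $\mathbb Z_p$-integral combinations of $1,\kappa^n,\dots,(\kappa^n)^{m-1}$ via Lemma \ref{lem:intLincomb}, build $A$ with controlled coefficients and $p$-adically small value at $\kappa^n$, and rule out $A(\kappa^n)=0$ by sandwiching $M(A)$ between the upper bound $|A|_1$ from Lemma \ref{lem:ineqMM} and the lower bound $M(Q)=H(\kappa^n)^{d'}=H(\kappa)^{nd'}$ together with Kronecker's theorem. The one genuine difference is the construction step: the paper invokes Mahler's $p$-adic analogue of Minkowski's linear forms theorem (Satz 1 in \cite{Mahler}) applied to the $n+1$ coordinate forms and the $m$ forms $\sum_i\lambda_{i,j}X_i$, whereas you replace this black box by an elementary pigeonhole (Siegel-lemma type) argument on the $(R+1)^{n+1}$ tuples in $\{0,\dots,R\}^{n+1}$ mapped into $(\mathbb Z_p/p^N\mathbb Z_p)^m$. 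This costs only the harmless losses $p$ and $+1$ in the bound for $R$, which you correctly absorb into $H(\kappa)^{Dn}$ for $n$ large (and your bound $|A(\kappa^n)|_p\le H(\kappa)^{-n(n-\sqrt n)}$ is even slightly stronger than required), and it buys a self-contained argument; you also keep the local degree $m$ and global degree $d'$ apart, where the paper writes $d$ for the local degree and just uses $[\mathbb Q(\kappa^n):\mathbb Q]\ge d$ --- the same inequality in different bookkeeping. One phrase to tighten: at the end of the non-vanishing step you conclude ``the left-hand side tends to infinity, a contradiction,'' but the right-hand side $2p(n+1)$ also tends to infinity; the correct (and immediate) point is that the left-hand side is at least $H(\kappa)^{n(\sqrt n+1)/(n+1)}$, exponential in $\sqrt n$, and so eventually exceeds the linear bound $2p(n+1)$, exactly parallel to the paper's comparison $H(\kappa)^{d\sqrt n}\le n+1$.
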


\begin{proof}Let us fix $n\in\mathbb N$, large in terms of $\kappa$ and let $d=[\mathbb Q_p(\kappa^n):\mathbb Q_p]\le D$. We have $|\kappa^n|_p\le1$, thus for every $0\le k\le n$ there are $\lambda_0^{(k)},\dots,\lambda_{d-1}^{(k)}\in\mathbb Z_p$ such that $\kappa^{nk}=\lambda_0^{(k)}+\dots+\lambda_{d-1}^{(k)}\kappa^{d-1}$ by Lemma \ref{lem:intLincomb}.

Let us define $n+1$ linear forms $L^{(k)}(X_0,\dots, X_n)=X_k, k=0,\dots, n$ and $d$ $p$-adic linear forms $L_1^{(i)}=\lambda_i^{(0)}X_0+\dots+\lambda_i^{(n)}X_n, i=0,\dots, d-1$. Moreover, let us define $f=\lfloor n(n-\sqrt n)h(\kappa)/\log(p)\rfloor$ and $\Lambda=p^{df/(n+1)}\le H(\kappa)^{d(n-\sqrt n)}\le H(\kappa)^{Dn}$. Then we have $\Lambda^{n+1}p^{-df}=1$ and hence by Satz 1 in \cite{Mahler} there exists $\mathbf a=(a_0,\dots, a_n)\in\mathbb Z^n\setminus\{0\}$ such that $\max\{|a_0|,\dots, |a_n|\}\le \Lambda$ and $\max\{|L^{(0)}_1(\mathbf a)|_p,\dots,|L^{(d-1)}_1(\mathbf a)|_p\}\le p^{-f}$. Let $A=a_nX^n+\dots+a_0\in\mathbb Z[X]$. Then by construction we have that $A(\kappa^n)=L^{(0)}_1(\mathbf a)+\dots+L^{(d-1)}_1(\mathbf a)\kappa^{d-1}$. In particular, we have that $$|A(\kappa^n)|_p\le p^{-f}\le pH(\kappa)^{-n(n-\sqrt n)}.$$

Next we want to show, that $A(\kappa^{n})\ne 0$. The idea is again to bound the Mahler measure $M(A)$ from above and below. For the upper bound we use \begin{equation}\label{eq:upper}M(A)\le |A|_1\le (n+1)\Lambda\le (n+1)H(\kappa)^{d(n-\sqrt{n})}.\end{equation}

Assume by contradiction that $A(\kappa)=0$. Let $Q\in\mathbb Z[X]$ be the $\mathbb Z$-minimal polynomial of $\kappa^{n}$. Since $Q$ is irreducible, it divides $A$ in $\mathbb Z[X]$ and therefore we find $M(Q)\le M(A)$. Note that $[\mathbb Q(\kappa^{n}):\mathbb Q]\ge d$. Using properties of the multiplicative height we find $$M(Q)^{1/d}\ge M(Q)^{1/[\mathbb Q(\kappa^{n}):\mathbb Q]}= H(\kappa^{n})=H(\kappa)^{n}.$$ This implies the second lower bound \begin{equation}\label{eq:lower}M(A)\ge M(Q) \ge H(\kappa)^{dn}.\end{equation} Comparing the bounds \eqref{eq:upper} and \eqref{eq:lower} we find $H(\kappa)^{\sqrt n}\le H(\kappa)^{d\sqrt n}\le n+1$, which does not hold if $n$ is large enough, since $H(\kappa)>1$ by Kronecker's theorem. This shows that $A(x^{n})\ne 0$.

Recall that $\Lambda\le H(\kappa)^{Dn}$, thus we have $|A|_1\le (n+1)H(\kappa)^{Dn}$. 
\end{proof}

\begin{proof}[Proof of Theorems \ref{thm:kappa} and \ref{thm:kappaGen}]

Let us define $c_\infty=2$ and $c_p=p$ for all primes. Let $\nu\in M_{\mathbb Q}$. Using Lemma \ref{lem:arch} if $\nu=\infty$ or Lemma \ref{lem:padic} with $p=\nu$ we know that there exists an incresing sequence of natural numbers $(n_k)_{k\in\mathbb N}$ such that for every $k\in \mathbb N$ there is a polynomial $A_k\in\mathbb Z[X]$ of degree at most $n_k$ such that $0<|A_k(\kappa^{n_k})|_\nu\le c_{\nu}H(\kappa)^{-(n_k-1)(n_k-\sqrt{n_k})}$ and $|A_k|_1\le 6n_kH(\kappa)^{dn_k}$.

Let us fix some $k$ such that $n_k^2\ge d$ and write $n=n_k$ and $A=A_k$. Let $P\in\mathbb Z[X]$ be the $\mathbb Z$-minimal polynomial of $\kappa$. Let $q$ be the smallest prime which does not divide $P(0)$ and note that this choice does not depend on $n$. Let $\delta\in \{0,1\}$ be one if $q$ divides $A(0)$ and zero otherwise. Then let us consider $$R_n=X^{n^2}P+qA(X^{n})+\delta qP=r_{n^2+d}X^{n^2+d}+\dots + r_0 \in\mathbb Z[X].$$ We have $0\ne |qA(\kappa^{n})|_\nu=|R_n(\kappa)|_\nu\le qc_{\nu}H(\kappa)^{-(n_k-1)(n_k-\sqrt{n_k})}$ and since for large enough $n$ we have $|P|_1\le H(\kappa)^{dn}$ also $|R_n|_1\le |P|_1+q|A|_1+q|P|_1\le 8nqH(\kappa)^{dn}$. The parameter $\delta$ is chosen in such a way that $q$ divides $r_0=qA(0)+\delta qP(0)$ exactly once. It is easy to see that $q$ divides all $r_i, 0\le i\le n^2-1$. Since $q$ does not divide $r_{n^2}=P(0)+qa_n$, we have $R_n\not\equiv 0 \;(\text{mod }p)$. Then by Lemma \ref{lem:Eis}, $R_n$ has a factorization $R_n=S_nT_n, S_n,T_n\in\mathbb Z[X]$, where $T_n$ is irreducible and of degree at least $n^2$. So $\deg(S_n)\le d$.

Next we bound $|T_n(\kappa)|_\nu$ from above. By Lemma \ref{lem:ineqMM} we have \begin{equation}\label{eq:bdL1}|S_n|_1\le 2^{\deg(S_n)}M(S_n)\le 2^dM(S_nT_n)\le 2^d|R_n|_1\le 2^d8nqH(\kappa)^{dn}= 2^{d+3}nqH(\kappa)^{dn}.\end{equation}
Since there is a place $\nu_K$ of $K=\mathbb Q(\kappa)$ such that $|x|_\nu= |x|_{\nu_K}$ for all $x\in K$, we can apply Lemma \ref{lem:PatKappa} to find a positive constant $c_0\le 1$ such that $$|S_n(\kappa)|_\nu\ge c_0^{\deg (S_n)}|S_n|_1^{-d}\ge c_0^d(2^{d+3}qnH(\kappa)^{dn})^{-d}.$$ Since $c_0,d$ and $q$ depend only on $\kappa$, so does $c_1=c_0^d(2^{d+3}q)^{-d}>0$ and we find the bound $|S_n(\kappa)|_p\ge c_1 n^{-d}H(\kappa)^{-d^2n}$. This allows us to bound \begin{align*}|T_n(\kappa)|_\nu&=|R_n(\kappa)|_\nu/|S_n(\kappa)|_\nu\le qc_{\nu}H(\kappa)^{-(n-1)(n-\sqrt{n})}c_1^{-1}n^dH(\kappa)^{d^2n}\\
&\le qc_{\nu}c_1^{-1}n^dH(\kappa)^{d^2n-(n-1)(n-\sqrt n)}\le cn^dH(\kappa)^{(d^2+2)n^{3/2}-n^2}\end{align*} where $c=qc_{\nu}c_1^{-1}$. In particular, we have  $|T_n(\kappa)|_\nu\to 0$ as $n\to \infty$. The Mahler measure is bounded by $M(T_n)\le M(R_n)\le |R_n|_1\le 8nqH(\kappa)^{dn}$.

Now we unfix $k$. We proved so far, that for every $k$ large enough there exists an irreducible $T_k\in\mathbb Z[X]$ of degree $n_k^2\le \deg(T_k)\le n_k^2+d$ such that $|T_k(\kappa)|_\nu\le cn_k^dH(\kappa)^{(d^2+2)n_k^{3/2}-n_k^2}$ and whose Mahler measure satisfies $M(T_k)\le 8n_kqH(\kappa)^{dn_k}$, where $c$ and $q$ do not depend on $k$. Let $\alpha_k\in \overline{\mathbb Q_\nu}$ be a root of $T_k$ and $t_k\in\mathbb Z\setminus\{0\}$ be the leading coefficient of $T_k$, a divisor of the leading coefficient of $P$. In particular, $|t_k|_\nu$ is bounded in terms of $\kappa$. For large enough $k$ we have $|T_k(\kappa)|_\nu\le 1$ and \begin{align*}\limsup_{k\to \infty}\frac{1}{\deg(T_k)}&\sum_{\alpha:T_k(\alpha)=0}\log|\alpha-\kappa|_\nu\le \limsup_{n\to \infty}\frac{\log|T_k(\kappa)|_\nu-\log|t_k|_\nu}{\deg(T_k)}\\
&\le \limsup_{k\to\infty}\frac{\log(c)+d\log(n_k)+((d^2+2)n_k^{3/2}-n_k^2)h(\kappa)}{n_k^2+d}\\
&\le  \limsup_{k\to\infty} -\frac{n_k^2h(\kappa)}{n_k^2+d}=-h(\kappa).\end{align*}

On the other hand we conclude from estimates above that $$h(\alpha_k)= \frac{\log(M(T_k))}{\deg(T_k)}\le\frac{\log(8q)+\log(n_k)+dn_kh(\kappa)}{n_k^2}$$ for all large $k$. As $h(\alpha_k)\ge 0$, this shows that $\lim_{k\to\infty} h(\alpha_k)=0$ and since $\deg(T_k)\ge n_k^2$ and $T_k$ is irreducible, the sequence $(\alpha_k)$ is strict.

Finally, we want to prove a lower bound for $|T_k(\kappa)|_\nu$. Using Lemma \ref{lem:PatKappa} we can bound $|T_k(\kappa)|_\nu\ge H(\kappa)^{-d\deg(T_k)}|T_k|_1^{-d}$. Since $T_k$ divides $R_k$, we can bound $|T_k|_1\le 2^{n_k^2+d+3}n_kqH(\kappa)^{dn_k}$ similarly to \eqref{eq:bdL1}. Then we have \begin{align*}\liminf_{k\to \infty}&\frac{1}{\deg(T_k)}\sum_{\alpha:T_k(\alpha)=0}\log|\alpha-\kappa|_\nu\\&\ge \liminf_{k\to \infty}\frac{\log|T_k(\kappa)|_\nu-\log|t_k|_\nu}{n_k^2}\\
&\ge \liminf_{k\to\infty}\frac{-d(n_k^2+d+dn_k)h(\kappa)-d(n_k^2+d+3)\log(2)-d\log(n_kq)}{n_k^2}\\
&\ge  \liminf_{k\to\infty} \frac{-dn_k^2h(\kappa)-dn_k^2\log(2)}{n_k^2}=-d(h(\kappa)+\log(2)).\qedhere\end{align*}
\end{proof}

\begin{proof}[Proof of Proposition \ref{pr:K}]
For $\kappa=0$, the average is bounded in absolute value by the height in every case and thus $0\in \mathcal{K_\nu}$. For $\kappa=1$, we have Breuillards result or Corollary 1.8 in \cite{BM}. We deduce from them that $\mu_\infty\subseteq \mathcal{K}_\infty$ in Corollary \ref{cor:muInf}. In the $p$-adic case we can deduce that $\mu_\infty\subseteq \mathcal{K}_p$ from Theorem \ref{thm:padicRoots}. For all other $\kappa$ one of the Theorems \ref{thm:BM}, \ref{thm:BMgen}, \ref{thm:kappa} and \ref{thm:kappaGen} implies that $\kappa\not\in\mathcal{K}_\nu$.\end{proof}

\appendix
\section{Roots of unity in the archimedian case}

Let us introduce a modified height, which in \cite{BM} is denoted by $h_d$.
\begin{definition}For an algebraic number $\alpha$ of degree $d=[\mathbb Q(\alpha):\mathbb Q]$ we define $$h'(\alpha)=h(\alpha)+\frac{\log(2d)}{d}>0.$$\end{definition}
\begin{lemma}\label{lem:h'} Let $n\in\mathbb N$ and $\alpha\in\overline{\mathbb Q}$. Let $d=[\mathbb Q(\alpha):\mathbb Q]$. Then we have $h'(\alpha^n)\le nh'(\alpha)$. If $d\ge n^2/2$ we have $h'(\alpha^n)\ge h'(\alpha)/2$.\end{lemma}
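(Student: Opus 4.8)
The plan is to reduce both inequalities to the standard identity $h(\alpha^n)=n\,h(\alpha)$ together with elementary bookkeeping of field degrees. Set $d'=[\mathbb Q(\alpha^n):\mathbb Q]$ and $k=[\mathbb Q(\alpha):\mathbb Q(\alpha^n)]$, so that $\mathbb Q(\alpha^n)\subseteq\mathbb Q(\alpha)$ gives $d=d'k$; moreover, since $\alpha$ is a root of $X^n-\alpha^n$ over $\mathbb Q(\alpha^n)$, we have $k\le n$. Unwinding the definition of $h'$ yields $h'(\alpha^n)=n\,h(\alpha)+\frac{\log(2d')}{d'}$ and $n\,h'(\alpha)=n\,h(\alpha)+n\frac{\log(2d'k)}{d'k}$, so after cancelling the common $n\,h(\alpha)$ term the first claim becomes the purely numerical inequality $\frac{\log(2d')}{d'}\le n\frac{\log(2d'k)}{d'k}$. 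This follows at once from $\log(2d'k)\ge\log(2d')$ and $n/k\ge 1$ (the latter being exactly $k\le n$); no case distinction is needed, and when $n=1$ everything collapses to an equality.

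For the lower bound the hypothesis $d\ge n^2/2$ is used exactly once. From $k\le n$ we obtain $d'k=d\ge n^2/2\ge k^2/2$, hence $2d'\ge k$ and therefore $\log(2d')\ge\log k$. Using $n\,h(\alpha)\ge\tfrac12 h(\alpha)$ (valid since $n\ge 1$ and $h(\alpha)\ge 0$), the inequality $h'(\alpha^n)\ge\tfrac12 h'(\alpha)$ reduces to $\frac{\log(2d')}{d'}\ge\frac{\log(2d)}{2d}$, which after clearing denominators and writing $d=d'k$ is $2k\log(2d')\ge\log(2d')+\log k$, i.e. $(2k-1)\log(2d')\ge\log k$. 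Since $2k-1\ge 1$ and $\log k\ge 0$, this is implied by $\log(2d')\ge\log k$, which we have established.

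The only point deserving care --- and the one thing here that is not a one-line monotonicity estimate for $x\mapsto\log(2x)/x$ --- is the degree bound $k=[\mathbb Q(\alpha):\mathbb Q(\alpha^n)]\le n$, since this is precisely what turns the hypothesis $d\ge n^2/2$ into the usable inequality $2d'\ge k$. I would also note in passing that the degenerate cases ($\alpha=0$ or $\alpha$ a root of unity, where $h$ vanishes) need no separate treatment: all the reductions use only $h(\alpha)\ge 0$, $n\ge 1$, and $d=d'k$ with $1\le k\le n$.
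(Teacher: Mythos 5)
Your proof is correct and follows essentially the same route as the paper's: both rest on the identity $h(\alpha^n)=nh(\alpha)$ together with the degree bound $[\mathbb Q(\alpha):\mathbb Q(\alpha^n)]\le n$, reducing each claim to an elementary numerical inequality. The only (inessential) difference is the final bookkeeping in the lower bound, where you clear denominators to $(2k-1)\log(2d')\ge\log k$ via $2d'\ge k$, whereas the paper bounds the ratio $\log(2d_n)/\log(2d)\ge 1/2$ using $\log(2d)\ge 2\log(n)$.
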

\begin{proof}Let $d_n=[\mathbb Q(\alpha^n):\mathbb Q]$. Since $[\mathbb Q(\alpha):\mathbb Q(\alpha^n)]\le n$, we find $d_n\ge d/n$. Thus, we have $$h'(\alpha^n)=h(\alpha^n)+\frac{\log(2d_n)}{d_n}\le nh(\alpha)+\log(2d)\frac{n}{d} = nh'(\alpha).$$
Now suppose that $d\ge n^2/2$. Then we have $\log(2d)\ge2\log(n)$ and thus $\log(n)/\log(2d)\le1/2.$ This implies that $$n\ge 1\ge \frac{\log(2d_n)}{\log(2d)}\ge \frac{\log(2d)-\log(n)}{\log(2d)}\ge 1-1/2 =1/2.$$Hence, we find
$$h'(\alpha^n)=h(\alpha^n)+\frac{\log(2d_n)}{d_n}\ge nh(\alpha)+\frac{\log(2d_n)}{d}\ge \frac{\log(2d_n)}{\log(2d)}\left(h(\alpha)+\frac{\log(2d)}{d}\right)\ge h'(\alpha)/2.\qedhere$$

\end{proof}
\begin{definition}Let $\alpha$ be an algebraic number and $f:D\subseteq\mathbb C\to\mathbb R$ a function which is integrable over the unit circle and defined at the conjugates of $\alpha$. Then as on page $2$ in \cite{BM} we define $$E(f,\alpha)=\left|\frac{1}{[\mathbb Q(\alpha):\mathbb Q]}\sum_{\sigma:\mathbb Q(\alpha)\to\mathbb C} f(\sigma(\alpha))-\int_0^1 f(e^{2\pi i t}) dt\right|,$$ the error between average and integral.\end{definition}
\begin{remark}The limit \eqref{eq:lim} holds if and only if $\lim_{n\to\infty} E(f,\alpha_n)=0$.\end{remark}
\begin{theorem}\label{thm:upper}Let $u\in \mathbb C$ be on the unit circle. Then for every $\alpha\in\overline{\mathbb Q},$ such that $h(\alpha)\le 1$ and no conjugate of $\alpha$ equals $u$ we have the upper bound 
$$\frac{1}{[\mathbb Q(\alpha):\mathbb Q]}\sum_{\sigma:\mathbb Q(\alpha)\to\mathbb C}\log|\sigma(\alpha)-u|\le 64h'(\alpha)^{1/3}\log\left(\frac{4}{h'(\alpha)}\right).$$
\end{theorem}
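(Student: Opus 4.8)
The plan is to reduce the statement to the quantitative version of Bilu's theorem of Baker and Masser in \cite{BM} --- which is formulated there for bounded Lipschitz test functions and in terms of the modified height $h_d=h'$ --- by cutting off the logarithmic singularity of $z\mapsto\log|z-u|$ at a scale $t$ that, in the end, I would take to be $t=h'(\alpha)^{1/3}$.

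First, since $|u|=1$, the standard evaluation $\int_0^1\log|e^{2\pi i t'}-u|\,dt'=\log\max\{1,|u|\}=0$ (Jensen's formula) shows that the left-hand side of the claimed inequality is equal to $\frac1d\sum_\sigma\log|\sigma(\alpha)-u|-\int_0^1\log|e^{2\pi i t'}-u|\,dt'$, where $d=[\mathbb Q(\alpha):\mathbb Q]$; in particular it is bounded by $E(f,\alpha)$ for $f(z)=\log|z-u|$. But $f$ is neither bounded nor Lipschitz, so \cite{BM} does not apply to it directly. To remedy this, for $0<t<1$ I would put $g_t(z)=\min\{\max\{\log|z-u|,\log t\},\,-\log t\}$, a function which is bounded by $\log(1/t)$, is Lipschitz with constant $1/t$, and satisfies $\log|z-u|\le g_t(z)$ as long as $|z-u|\le1/t$. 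Splitting the embeddings $\sigma$ according to whether $|\sigma(\alpha)-u|\le1/t$, I get
$$\frac1d\sum_\sigma\log|\sigma(\alpha)-u|\ \le\ \Big(\frac1d\sum_\sigma g_t(\sigma(\alpha))-\int_0^1 g_t(e^{2\pi i t'})\,dt'\Big)\ +\ \int_0^1 g_t(e^{2\pi i t'})\,dt'\ +\ R_t ,$$
where $R_t$ gathers the contribution of the embeddings with $|\sigma(\alpha)-u|>1/t$. For such $\sigma$ one has $|\sigma(\alpha)|>1/t-1$, hence $\log|\sigma(\alpha)-u|\le\log2+\log|\sigma(\alpha)|$, and their number times $\log(1/(2t))$ is at most $\sum_{|\sigma(\alpha)|>1}\log|\sigma(\alpha)|\le d\,h(\alpha)$ by Jensen's formula again; so $R_t\le h(\alpha)(1+\log2/\log(1/(2t)))=O(h(\alpha))$.

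It then remains to estimate the two integral-type terms. The first bracket is at most $E(g_t,\alpha)$, and I would feed the bounded Lipschitz function $g_t$ into the quantitative Bilu theorem of \cite{BM}: since $h(\alpha)\le1$, this gives (up to a logarithmic factor) a bound $E(g_t,\alpha)\le c\,t^{-1}h'(\alpha)^{2/3}$, the Lipschitz constant $1/t$ entering linearly, and with $t=h'(\alpha)^{1/3}$ it becomes $O\big(h'(\alpha)^{1/3}\log(1/h'(\alpha))\big)$. For the second term, on the unit circle the upper cut-off is inactive (there $|e^{2\pi i t'}-u|\le2<1/t$), so it is at most $\int_{|e^{2\pi i t'}-u|<t}(\log t-\log|e^{2\pi i t'}-u|)\,dt'$; the domain has Lebesgue measure $O(t)$, and a routine estimate of this $\int\log(1/s)$-type integral bounds it by $O(t)=O(h'(\alpha)^{1/3})$. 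Adding the three contributions and using $h(\alpha)\le h'(\alpha)$ to absorb $R_t$, and noting that $h(\alpha)\le1$ forces $h'(\alpha)<4$ (indeed $h'(\alpha)\le1+\log2$) so that $\log(4/h'(\alpha))>0$, one arrives at a bound of the shape $C\,h'(\alpha)^{1/3}\log(4/h'(\alpha))$; carefully tracking every constant --- the one from \cite{BM}, the measure estimate, the elementary inequalities --- is what pins the admissible value down to $64$.

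The step I expect to be the main obstacle is making the cost of the truncation match the target exponent: the cut-off scale $t$ has to be chosen so that the discrepancy error coming from \cite{BM}, which degrades like $1/t$ as the Lipschitz constant of $g_t$ blows up, is of the same order as the smoothing error, which grows like $t$; this balance outputs exactly $h'(\alpha)^{1/3}$ only because the exponent in \cite{BM} is $2/3$, and it costs the unavoidable extra factor $\log(1/h'(\alpha))$. A secondary but genuine point is that $g_t$ is unbounded near $\infty$, so conjugates of $\alpha$ far from the unit circle must be discarded by hand; here the elementary inequality $\sum_{|\sigma(\alpha)|>1}\log|\sigma(\alpha)|\le d\,h(\alpha)$ together with $h(\alpha)\le1$ keeps their total contribution harmless. (If the theorem of \cite{BM} is in fact formulated so as to allow a logarithmic singularity, the truncation can be skipped and the statement follows by inserting $f(z)=\log|z-u|$ and $\int_0^1 f(e^{2\pi i t'})\,dt'=0$.)
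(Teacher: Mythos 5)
Your overall strategy coincides with the paper's: cut off the logarithmic singularity of $f_u$ at a small scale, feed the truncated function into the quantitative equidistribution theorem of Baker--Masser (Theorem 1.7 of \cite{BM}, stated for the modified height $h'$), compare the true sum/integral with the truncated ones, and choose the cut-off in terms of $h'(\alpha)$. However, the quantitative input you import is misstated. The bound you attribute to \cite{BM}, $E(g_t,\alpha)\le c\,t^{-1}h'(\alpha)^{2/3}$, is not the shape of that theorem: applied to the truncation $g_\delta(z)=\log\max\{\delta,|z-u|\}$ it gives $E(g_\delta,\alpha)\le 28|\log\delta|\,h'(\alpha)^{1/3}+\tfrac{7}{\delta}h'(\alpha)$, i.e.\ the Lipschitz constant is paired with $h'(\alpha)$ itself, and there is a main term $|\log\delta|\,h'(\alpha)^{1/3}$ which is where the exponent $1/3$ actually comes from --- not from the $t$-versus-$1/t$ balancing you single out as the crux (with the correct form, any cut-off between $h'^{2/3}$ and $h'^{1/3}$ works; the paper takes $\delta=\sqrt{h'(\alpha)\log 2/(8\log(4/h'(\alpha)))}$, roughly $h'^{1/2}$, and your $t=h'^{1/3}$ would also do). Two further remarks on the set-up: Theorem 1.7 of \cite{BM} already tolerates logarithmic growth at $0$ and $\infty$ (that is what the constants $M_0,M_\infty$ are for), so your upper cap at $\log(1/t)$ and the separate treatment of the conjugates with $|\sigma(\alpha)-u|>1/t$ via $R_t$ are unnecessary, though harmless; conversely, your parenthetical shortcut of skipping the truncation entirely is not available, because the theorem demands a Lipschitz bound on an annulus around the unit circle, and the singularity at $u$ lies exactly there.

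The genuine gap is the constant: $64$ is part of the statement, and your sketch never produces it --- ``carefully tracking every constant'' is precisely the substance of the proof at this point. In the paper the value $64$ arises from the specific choice of $\delta$ above together with the term-by-term estimates $28|\log\delta|h'^{1/3}\le 30\,h'^{1/3}\log(4/h')$, $\tfrac7\delta h'\le 33\,h'^{1/3}\log(4/h')$ and $2\delta|\log\delta|\le h'^{1/3}\log(4/h')$. With your different cut-off, the corrected form of the \cite{BM} bound, and the additional $R_t$-term of size $O(h(\alpha))$ to absorb, there is no a priori reason the bookkeeping lands at or below $64$; to certify the stated constant you would essentially have to redo the optimization of $\delta$ and the explicit estimates as in the paper.
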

\begin{proof}The problem with $f_{u}$ is the singularity at $u$. We therefore let $0<\delta\le 1/2$ be a parameter and define $g_\delta(z) = \log \max\{\delta,|z-u|\}\ge\log(\delta)$. As $\delta$ gets smaller, $g_\delta$ approximates $f_u$. By computations very similar to those in the proof of Corollary 1.8 \cite{BM}, one can show that $g_\delta$ satisfies the assumptions of Theorem 1.7 \cite{BM} with the same constants $\tilde M, V,  M_\infty, M_0$ and $\tilde L$. Hence, as in (8.5) \cite{BM} the application of it yields $E(g_\delta,\alpha)\le 28|\log(\delta)|h'(\alpha)^{1/3} +\frac{7}{\delta}h'(\alpha)$.

Let $\theta\in[0, 1)$ be such that $u=e^{2\pi i \theta}$ and define $\mathcal S=\{\sigma: \mathbb Q(\alpha)\to\mathbb C: |\sigma(\alpha)-u|\le \delta\},\\ \Theta=\{t\in [0,1): |e^{2\pi i t}-u|\le \delta\}$ of length $|\Theta|$, 
$$G=\frac{1}{d}\sum_{\sigma\in\mathcal S}\log |\sigma(\alpha)-u|\text{ and } G'=\frac{1}{d}\sum_{\sigma\in\mathcal S}g_\delta(\sigma(\alpha))=\frac{\#\mathcal S}{d}\log(\delta)\le 0,$$ and also $$J=\int_{\Theta}f_u(e^{2\pi i t})dt \text{ and }J'=\int_{\Theta}g_{\delta}(e^{2\pi it})dt=|\Theta|\log(\delta).$$ Note that $G\le G'$. By a change of variables $s=2\pi (t-\theta)$ we find that $J$ and $J'$ are equal to the integrals $J$ and $J'$ in \cite{BM}. Thus, according to (8.10) \cite{BM} we have $|J-J'|\le 2\delta|\log(\delta)|$.

Since $f_u$ and $g_\delta$ do only differ where $|u-z|\le \delta$ and $\int_0^{1} f_u(e^{2\pi it}) dt =0$, we find \begin{align}\frac{1}{d}\sum_{\sigma:\mathbb Q(\alpha)\to\mathbb C}f_u(\sigma(\alpha))&=\frac{1}{d}\sum_{\sigma:\mathbb Q(\alpha)\to\mathbb C}g_\delta(\sigma(\alpha))+G-G'\notag\\
&\le \frac{1}{d}\sum_{\sigma:\mathbb Q(\alpha)\to\mathbb C}g_\delta(\sigma(\alpha))-\int_0^{1}g_{\delta}(e^{2\pi it})dt+\int_0^{1} f_u(e^{2\pi it}) dt + J' -J\notag\\
&\le E(g_\delta,\alpha)+|J-J'|
\notag \\&\le 28|\log(\delta)|h'(\alpha)^{1/3} +\frac{7}{\delta}h'(\alpha)+2\delta|\log(\delta)|.\label{eq:BdDelta}\end{align}

Since $h(\alpha)\le 1$ and $\log(2d)\le d$, we have $h'(\alpha)\le 2$. As in (8.12) \cite{BM} we choose $$\delta=\sqrt{\frac{h'(\alpha)\log(2)}{8\log(4/h'(\alpha))}}\le 1/2$$ and find $|\log(\delta)|\le \frac{26}{25}\log(4/h'(\alpha))$ and thus $$28|\log(\delta)|h'(\alpha)^{1/3}\le 30h'(\alpha)^{1/3}\log(4/h'(\alpha)).$$ Since $h'(\alpha)\le 2$, we have $h'(\alpha)^{1/2}\le 2^{1/6}h'(\alpha)^{1/3}$. Therefore, $$\frac{7}{\delta}h'(\alpha)=7\sqrt 8h'(\alpha)^{1/2}\left(\frac{\log(4/h'(\alpha))}{\log(2)}\right)^{1/2}\le 33h'(\alpha)^{1/3}\log(4/h'(\alpha))\text{ and }$$
$$2\delta|\log(\delta)|\le \frac{52}{25\sqrt 8}h'(\alpha)^{1/2}\log(2)^{1/2}\log(4/h'(\alpha))^{1/2}\le h'(\alpha)^{1/3}\log(4/h'(\alpha)).$$ Plugging in these bounds in \eqref{eq:BdDelta} we find $$\frac{1}{d}\sum_{\sigma:\mathbb Q(\alpha)\to\mathbb C}f_u(\sigma(\alpha))\le 64h'(\alpha)^{1/3}\log\left(\frac{4}{h'(\alpha)}\right).\qedhere$$\end{proof}

\begin{corollary}\label{cor:muInf}Let $\zeta\in\mu_n$. Then for every $\alpha\in \overline{\mathbb Q}\setminus\mu_n$ such that $h(\alpha)\le 1/n$ and $[\mathbb Q(\alpha):\mathbb Q]\ge n^2/2$ we have $$E(f_\zeta,\alpha)\le 104nh'(\alpha)^{1/3}\log\left(\frac{8}{h'(\alpha)}\right).$$ In particular, we have $\mu_\infty\subseteq \mathcal{K}_\infty$.\end{corollary}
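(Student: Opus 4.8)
The plan is to reduce the statement about $f_\zeta$ for a general $n$-th root of unity $\zeta$ to the case $u=1$ handled by Theorem \ref{thm:upper}, by passing from $\alpha$ to the power $\alpha^n$. First I would observe the algebraic identity $f_\zeta(z)=\log|z-\zeta|$ and that, since $\zeta^n=1$, for any conjugate $\sigma(\alpha)$ we can relate $|\sigma(\alpha)-\zeta|$ to a factor of $|\sigma(\alpha)^n-1|$: indeed $z^n-1=\prod_{j=0}^{n-1}(z-\zeta_j)$ where $\zeta_j$ runs over all $n$-th roots of unity. Summing $f_1$ over the conjugates of $\alpha^n$ and accounting for the multiplicity with which each conjugate of $\alpha$ contributes gives
$$\frac{1}{[\mathbb Q(\alpha^n):\mathbb Q]}\sum_{\tau:\mathbb Q(\alpha^n)\to\mathbb C}\log|\tau(\alpha^n)-1| = \frac{1}{[\mathbb Q(\alpha):\mathbb Q]}\sum_{\sigma:\mathbb Q(\alpha)\to\mathbb C}\sum_{j=0}^{n-1}\log|\sigma(\alpha)-\zeta_j|,$$
since $\sigma\mapsto\sigma(\alpha)^n$ is an $[\mathbb Q(\alpha):\mathbb Q(\alpha^n)]$-to-one map onto the embeddings of $\mathbb Q(\alpha^n)$ and each fibre corresponds to the Galois orbit structure. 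I would then isolate the $j$ with $\zeta_j=\zeta$ and bound the other $n-1$ terms: each sum $\frac1d\sum_\sigma\log|\sigma(\alpha)-\zeta_j|$ has absolute value controlled in the same way, so the main term dominates up to a factor $n$.

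Next I would apply Theorem \ref{thm:upper} with $u=1$ to $\alpha^n$. For this I need $h(\alpha^n)\le 1$, which follows from $h(\alpha^n)=nh(\alpha)\le n\cdot\frac1n=1$, and I need that no conjugate of $\alpha^n$ equals $1$, i.e. $\alpha\notin\mu_n$, which is the hypothesis. Theorem \ref{thm:upper} then yields an upper bound $64\,h'(\alpha^n)^{1/3}\log(4/h'(\alpha^n))$ for the $\alpha^n$-average. To convert back to $h'(\alpha)$ I would invoke Lemma \ref{lem:h'}: the hypothesis $[\mathbb Q(\alpha):\mathbb Q]\ge n^2/2$ gives $h'(\alpha^n)\ge h'(\alpha)/2$, and always $h'(\alpha^n)\le nh'(\alpha)$. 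Using $h'(\alpha^n)^{1/3}\le (nh'(\alpha))^{1/3}\le n\,h'(\alpha)^{1/3}$ and $\log(4/h'(\alpha^n))\le\log(8/h'(\alpha))$ (from $h'(\alpha^n)\ge h'(\alpha)/2$) gives a bound of the shape $64n\,h'(\alpha)^{1/3}\log(8/h'(\alpha))$ for the averaged-over-all-$n$-th-roots quantity. Since Theorem \ref{thm:upper} only provides an upper bound, I would need to run the same argument with $u$ replaced by each $\zeta_j$ to bound the averages $\frac1d\sum_\sigma\log|\sigma(\alpha)-\zeta_j|$ from above, and then extract the lower bound on the single $\zeta$-term by subtracting: $\frac1d\sum_\sigma f_\zeta(\sigma(\alpha))$ equals the $\alpha^n$-average minus $\sum_{\zeta_j\ne\zeta}\frac1d\sum_\sigma f_{\zeta_j}(\sigma(\alpha))$, and each of the latter $n-1$ pieces is $\le 64n h'(\alpha)^{1/3}\log(8/h'(\alpha))$ from above and, by the same token applied with the roles reversed, bounded below as well. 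Collecting the at most $n$ contributions on each side and using that $\int_0^1 f_\zeta(e^{2\pi it})dt=0$ so that $E(f_\zeta,\alpha)$ is exactly the absolute value of this average, I expect to land at a constant like $104n$ after bookkeeping.

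The main obstacle is the accounting in both directions: Theorem \ref{thm:upper} is one-sided, so to get the two-sided estimate $E(f_\zeta,\alpha)\le(\cdots)$ I must bound each $\frac1d\sum_\sigma\log|\sigma(\alpha)-\zeta_j|$ both above (directly from the theorem) and below (by writing it as the full $\alpha^n$-sum minus the other $n-1$ one-sided-bounded sums, which needs the full sum to also be bounded below — but the full $\alpha^n$-sum equals $E(f_1,\alpha^n)$ up to sign since its integral vanishes, so $\left|\frac1{d_n}\sum_\tau f_1(\tau(\alpha^n))\right|\le 64 h'(\alpha^n)^{1/3}\log(4/h'(\alpha^n))$ is genuinely two-sided). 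Once that is in hand, the constant-chasing with Lemma \ref{lem:h'} and the elementary inequalities $h'(\alpha)\le 2$, $\log(8/h'(\alpha))\ge\log 4$ is routine. The final sentence $\mu_\infty\subseteq\mathcal K_\infty$ then follows: given a strict sequence $(\alpha_m)$ with $h(\alpha_m)\to 0$, strictness forces $[\mathbb Q(\alpha_m):\mathbb Q]\to\infty$, so for fixed $\zeta\in\mu_n$ eventually $h(\alpha_m)\le 1/n$ and $[\mathbb Q(\alpha_m):\mathbb Q]\ge n^2/2$ and $\alpha_m\notin\mu_n$, whence $E(f_\zeta,\alpha_m)\to 0$ because $h'(\alpha_m)\to 0$ makes the right-hand side tend to $0$.
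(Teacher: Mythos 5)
Your overall strategy coincides with the paper's: decompose via $X^n-1=\prod_{\xi\in\mu_n}(X-\xi)$ so that the $f_1$-average at $\alpha^n$ equals the sum over $\xi\in\mu_n$ of the $f_\xi$-averages at $\alpha$, bound the $n-1$ terms with $\xi\ne\zeta$ from above by Theorem \ref{thm:upper}, convert $h'(\alpha^n)$ into $h'(\alpha)$ via Lemma \ref{lem:h'} (this is where $[\mathbb Q(\alpha):\mathbb Q]\ge n^2/2$ enters), and the hypothesis checks $h(\alpha^n)\le nh(\alpha)\le 1$ and $\alpha\notin\mu_n$ are correct, as is the final deduction of $\mu_\infty\subseteq\mathcal K_\infty$ via strictness and Northcott. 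However, there is a genuine gap at exactly the point you flag as the main obstacle: you assert that $\left|\frac{1}{d}\sum_{\sigma}\log|\sigma(\alpha^n)-1|\right|\le 64\,h'(\alpha^n)^{1/3}\log(4/h'(\alpha^n))$ ``is genuinely two-sided'' because the integral of $f_1$ over the unit circle vanishes. That is a non sequitur: the vanishing of the integral only says $E(f_1,\alpha^n)$ equals the absolute value of the average, while Theorem \ref{thm:upper} bounds the \emph{signed} average from above and gives no control on how negative it can be. Consequently your extraction of the lower bound for the single $\zeta$-term rests on an unproved lower bound for the full $\alpha^n$-sum, and your alternative phrase ``by the same token applied with the roles reversed'' is circular for the same reason.

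The paper closes precisely this hole with an external ingredient: Corollary 1.8 of \cite{BM} (the quantitative form of the statement $1\in\mathcal K_\infty$), which is a genuinely two-sided estimate $E(f_1,\alpha^n)\le 40\,h'(\alpha^n)^{1/3}\log(4/h'(\alpha^n))$; this is also where the final constant comes from, since $64n+40n^{1/3}\le 104n$. Alternatively, you could repair the step elementarily: since no conjugate of $\alpha^n$ equals $1$, the value at $1$ of the $\mathbb Z$-minimal polynomial $P_n$ of $\alpha^n$ is a nonzero integer, so the average equals $\frac{1}{d_n}(\log|P_n(1)|-\log|a_{d_n}|)\ge -h(\alpha^n)\ge -nh'(\alpha)$, which is easily absorbed into the right-hand side. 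Either fix must be made explicit; as written, the two-sided bound does not follow from Theorem \ref{thm:upper}. A minor stylistic point: the identity can be stated directly as a sum over embeddings of $\mathbb Q(\alpha)$ evaluated at $\sigma(\alpha)^n$, which avoids your multiplicity bookkeeping over embeddings of $\mathbb Q(\alpha^n)$.
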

\begin{proof}
Let $d=[\mathbb Q(\alpha):\mathbb Q]$. Using that $X^n-1=\prod_{\xi\in\mu_n}(X-\xi)$ we find $$\frac{1}{d}\sum_{\sigma:\mathbb Q(\alpha)\to\mathbb C}\log|\sigma(\alpha^n)-1|=\sum_{\xi\in\mu_n}\frac{1}{d}\sum_{\sigma:\mathbb Q(\alpha)\to\mathbb C}\log|\sigma(\alpha)-\xi|.$$ Lemma \ref{lem:h'} implies that $h'(\alpha^n)^{1/3}\log(4/h'(\alpha^n))\le n^{1/3}h'(\alpha)^{1/3}\log(8/h'(\alpha))$. Thus, with Theorem \ref{thm:upper} and Corollary 1.8 \cite{BM} applied to $\alpha^n$ of height $h(\alpha^n)\le nh(\alpha)\le 1$ we find \begin{align*}-\frac{1}{d}\sum_{\sigma:\mathbb Q(\alpha)\to\mathbb C}\log|\sigma(\alpha)-\zeta|
&=\sum_{\xi\in\mu_n\setminus\{\zeta\}}\frac{1}{d}\sum_{\sigma:\mathbb Q(\alpha)\to\mathbb C}\log|\sigma(\alpha)-\xi|-\frac{1}{d}\sum_{\sigma:\mathbb Q(\alpha)\to\mathbb C}\log|\sigma(\alpha^n)-1|\\
&\le (n-1)64h'(\alpha)^{1/3}\log(4/h'(\alpha)))+40h'(\alpha^n)^{1/3}\log\left({4}/{h'(\alpha^n)}\right)\\
&\le 64nh'(\alpha)^{1/3}\log(8/h'(\alpha))+40n^{1/3}h'(\alpha)^{1/3}\log\left({8}/{h'(\alpha)}\right)\\
&\le 104nh'(\alpha)^{1/3}\log(8/h'(\alpha)).\end{align*}
Together with the bound from Theorem \ref{thm:upper} and since the integral $\int_0^1 f_\zeta(e^{2\pi i t}) dt$ vanishes we have $$E(f_\zeta,\alpha)=\left|\frac{1}{d}\sum_{\sigma:\mathbb Q(\alpha)\to\mathbb C}\log|\sigma(\alpha)-\zeta|\right|\le 104nh'(\alpha)^{1/3}\log\left(\frac{8}{h'(\alpha)}\right).$$

Now let $\zeta\in\mu_\infty$ be of order $N$. Let $(\alpha_n)_{n\in\mathbb N}$ be a strict sequence of algebraic numbers such that $\alpha_n$ is never a conjugate to $\zeta$ and such that $\lim_{n\to\infty} h(\alpha_n)=0$. Since the sequence is strict, there exists $n_0 \in\mathbb N$ such that for all $n\ge n_0$ we have $\alpha_n\notin \mu_N$, $h(\alpha_n)\le 1/N$ and $[\mathbb Q(\alpha_n):\mathbb Q]\ge N^2/2$ by the Northcott property. Hence, it satisfies the assumptions of the first part of this corollary. Strictness and the Northcott property imply that also $\lim_{n\to\infty}h'(\alpha_n)=0$. Thus, we have $$\lim_{n\to\infty}E(f_\zeta,\alpha_n)\le \lim_{n\to\infty}104Nh'(\alpha_n)^{1/3}\log(8/h'(\alpha_n))=0$$ and hence $\zeta\in \mathcal K_\infty$.\end{proof}

\section{Roots of unity in the $p$-adic case}
In this section we work with $\overline{\mathbb Q}\subseteq \overline{\mathbb Q_p}$ for some prime $p$.
\begin{lemma}\label{lem:1}Let $\alpha\in\overline{\mathbb Q}\setminus\{1\}$ such that $h(\alpha)\le 1$. Then we have $$\left|\frac{1}{[\mathbb Q(\alpha):\mathbb Q]}\sum_{\sigma:\mathbb Q(\alpha)\to \overline{\mathbb Q}} \log|\sigma(\alpha)-1|_p\right| \le 40h'(\alpha)^{1/3}\log\left(\frac{4}{h'(\alpha)}\right)+h(\alpha).$$\end{lemma}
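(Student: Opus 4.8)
The plan is to bound the $p$-adic average for $\alpha$ near $1$ by comparing it to the archimedean situation already handled, using the product formula. Let $K=\mathbb Q(\alpha)$, $d=[K:\mathbb Q]$, and let $T\in\mathbb Z[X]$ be the $\mathbb Z$-minimal polynomial of $\alpha$ with leading coefficient $t$. As observed in the proof-strategy discussion, for any place $\nu$ one has
\[
\frac{1}{d}\sum_{\sigma:K\to\overline{\mathbb Q_\nu}}\log|\sigma(\alpha)-1|_\nu=\frac{1}{d}\log|T(1)/t|_\nu,
\]
so the quantity I must control is $\frac{1}{d}\log|T(1)/t|_p$, and the archimedean version $\frac{1}{d}\log|T(1)/t|_\infty$ (i.e. $E(f_1,\alpha)$ up to the vanishing integral) is already bounded by Theorem \ref{thm:upper} and Corollary 1.8 of \cite{BM}. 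First I would write $T(1)/t=\prod_{\sigma}(\sigma(\alpha)-1)$; this is a nonzero element of $K$ (nonzero since $\alpha\ne 1$ and no conjugate of $\alpha$ equals $1$ — here I use $\alpha\notin\mu_\infty$, or rather just $\alpha\ne1$ together with the fact that a conjugate equal to $1$ would force $\alpha=1$).

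The key step is the product formula applied to $x=T(1)/t\in K^\times$, or more cleanly to the rational integer $T(1)$ together with $t$. Writing the product formula for $\mathbb Q(\alpha)$ over all places and isolating the contribution at places above $p$ versus places above $\infty$ versus the remaining finite places, I get that $\log|T(1)/t|_p$ (summed over places above $p$ with the right multiplicities, which is exactly $d$ times the $p$-adic average) equals minus the sum of the contributions at $\infty$ and at all other finite primes. The archimedean contribution is controlled by Theorem \ref{thm:upper}: $\frac{1}{d}\log|T(1)/t|_\infty\le 64h'(\alpha)^{1/3}\log(4/h'(\alpha))$ from above, and by Corollary 1.8 of \cite{BM} the absolute value is bounded by $40h'(\alpha)^{1/3}\log(4/h'(\alpha))$; combining with the lower bound direction these give $|\frac1d\log|T(1)/t|_\infty|\le 40 h'(\alpha)^{1/3}\log(4/h'(\alpha))$. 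For the non-archimedean places other than those above $p$, I would bound $\sum_{\ell\ne p}\frac1d\log\max\{1,\ldots\}$ crudely by $h(T(1)/t)$ type estimates — more precisely, the contribution of all finite places is at most $\frac1d\log|T(1)|$ after clearing the denominator $t$, and $\frac1d\log|T(1)|\le \frac1d\log(|T|_1\cdot 1)$ since $|1|=1$, which by Lemma \ref{lem:ineqMM} is at most $\frac1d(d\log 2+\log M(T))=\log2+h(\alpha)$ — but this is too lossy; instead I should only bound the finite-place sum away from $p$, getting a clean $h(\alpha)$ term.

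The cleanest route: by the product formula, $\frac1d\sum_{\nu\mid p}d_\nu\log|T(1)/t|_\nu = -\frac1d\sum_{\nu\mid\infty}d_\nu\log|T(1)/t|_\nu-\frac1d\sum_{\nu\nmid p\infty}d_\nu\log|T(1)/t|_\nu$. The first term on the right is the archimedean average, bounded in absolute value by $40h'(\alpha)^{1/3}\log(4/h'(\alpha))$. For the second, note $|T(1)/t|_\nu=|T(1)|_\nu/|t|_\nu\le |T(1)|_\nu\le 1$ at any finite $\nu$ not dividing $t$ (since $T(1)\in\mathbb Z$), but where $\nu\mid t$ we might pick up a positive contribution; however summing $-\log|t|_\nu$ over finite $\nu$ only adds $\frac1d\log|t|$ which by $|t|\le M(T)$ is at most $h(\alpha)$... actually I want an upper bound on $-\frac1d\sum_{\nu\nmid p\infty}d_\nu\log|T(1)/t|_\nu$, i.e. a lower bound on $\frac1d\sum_{\nu\nmid p\infty}d_\nu\log|T(1)/t|_\nu$; since each such term is $\log|T(1)|_\nu-\log|t|_\nu$ and $\sum_{\nu \text{ finite}}\log|T(1)|_\nu = -\log|T(1)|_\infty - (\text{positive})\le 0$ is awkward. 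The honest move is to bound $|\frac1d\sum_{\nu\nmid p\infty}d_\nu\log|T(1)/t|_\nu|$ by $h(T(1)/t)$, and use Lemma \ref{lem:bdHQ} with $Q=X-1$ to get $h(T(1)/t)=h(\prod(\sigma\alpha-1))\le$ something like $h(\alpha)+O(1/d)$ — but a product of conjugates has height not simply bounded. \textbf{This matching of the finite-place error to exactly $h(\alpha)$ (with no extra constant or logarithmic loss) is the main obstacle}, and I expect the argument to run: the total finite-place contribution (all $\nu$ finite) to $\frac1d\log|T(1)/t|$ equals $-\frac1d\log|T(1)/t|_\infty$ by the product formula, and $\frac1d\log|t|_\infty\le \frac1d\log M(T)=h(\alpha)$, while $|T(1)|_\infty/|t|_\infty$ compared to the $\infty$-average is handled as above; isolating the $p$-part from the rest of the finite part then costs only the $\frac1d\log|t|_p\ge -h(\alpha)$ slack, delivering the stated bound $40h'(\alpha)^{1/3}\log(4/h'(\alpha))+h(\alpha)$.
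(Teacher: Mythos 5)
Your reduction of the average to $\frac1d\log|T(1)/t|_p$ and your appeal to Corollary 1.8 of \cite{BM} for the archimedean quantity $\frac1d(\log|T(1)|-\log|t|)$ are the right ingredients, but the proof is not closed: the step you yourself flag as ``the main obstacle'' --- controlling the contribution of the finite places away from $p$ after invoking the product formula --- is never actually carried out, and your concluding sentence is an expectation rather than an argument. As stated, the accounting there does not add up: isolating the place $p$ from the other finite places does not cost ``only the $\frac1d\log|t|_p\ge -h(\alpha)$ slack''; the discrepancy $-\sum_{\ell\ne p}\log|T(1)/t|_\ell$ involves both $\sum_{\ell\ne p}\log|T(1)|_\ell$ (which lies between $-\log|T(1)|$ and $0$) and $-\sum_{\ell\ne p}\log|t|_\ell$ (between $0$ and $\log|t|$), and these must be bounded separately, in opposite directions, for the upper and lower estimates. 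So there is a genuine gap.

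The repair is far simpler than the machinery you set up, and it is what the paper does. Since $\alpha\ne1$ and $T$ is irreducible, $T(1)$ is a nonzero rational integer, so $|T(1)|_p\le1$; and for the nonzero integer $t$ one has $1/|t|\le|t|_p\le1$. Hence directly $\frac1d\log|T(1)/t|_p\le-\frac1d\log|t|_p\le\frac1d\log|t|\le\frac1d\log M(T)=h(\alpha)$, which is the upper bound, and in the other direction $-\frac1d\log|T(1)/t|_p\le-\frac1d\log|T(1)|_p\le\frac1d\log|T(1)|=\frac1d\bigl(\log|T(1)|-\log|t|\bigr)+\frac1d\log|t|\le 40h'(\alpha)^{1/3}\log\bigl(4/h'(\alpha)\bigr)+h(\alpha)$, using Corollary 1.8 of \cite{BM} for the first summand and $|t|\le M(T)$ for the second. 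No product formula over $K$ and no splitting of places is needed; the only nontrivial input is the archimedean estimate you already identified, and the elementary inequality $|k|_p\ge1/|k|$ for $k\in\mathbb Z\setminus\{0\}$ is all that survives of your product-formula considerations.
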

\begin{proof}Let $P=a_dX^d+\dots+a_0\in\mathbb Z[X]$ be the $\mathbb Z$-minimal polynomial of $\alpha$. The average is given by $$A:=\frac{1}{d}\sum_{\sigma:\mathbb Q(\alpha)\to \overline{\mathbb Q}} \log|\sigma(\alpha)-1|_p=\frac{\log|P(1)|_p-\log|a_d|_p}{d}$$ and we now prove upper and lower bounds.

 Since $P(1)$ is an integer, we have $\log|P(1)|_p\le 0$. Note that for an integer $k\in\mathbb Z\setminus \{0\}$ we have $|k|_p\ge |k|^{-1}$. This implies that $-\log|a_d|_p\le \log|a_d|$. Therefore, we find the upper bound $A\le \frac{\log|a_d|}{d}\le h(\alpha)$.
 
For the lower bound we have $\log|a_d|_p\le 0$ and $-\log|P(1)|_p\le \log|P(1)|$. R. Baker and D. Masser proved in Corollary 1.8 \cite{BM} that $\frac{\log|P(1)|-\log|a_d|}{d}\le 40h'(\alpha)^{1/3}\log\left({4}/{h'(\alpha)}\right)$.
We therefore find $$-A\le \frac{\log|P(1)|}{d}=\frac{\log|P(1)|-\log|a_d|+\log|a_d|}{d}\le 40h'(\alpha)^{1/3}\log\left({4}/{h'(\alpha)}\right)+h(\alpha).\qedhere$$\end{proof}

\begin{lemma}\label{lem:upper}Let $x\in \overline{\mathbb Q}$ be such that $|x|_p\le1$. Then for every $\alpha\in\overline{\mathbb Q},$ such that $h(\alpha)\le 1$ and no conjugate of $\alpha$ equals $x$ we have the upper bound 
$$\frac{1}{[\mathbb Q(\alpha):\mathbb Q]}\sum_{\sigma:\mathbb Q(\alpha)\to\overline{\mathbb Q}}\log|\sigma(\alpha)-x|\le h(\alpha).$$\end{lemma}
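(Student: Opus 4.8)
The plan is to reduce the normalised sum to a single value of the $\mathbb Z$-minimal polynomial of $\alpha$, exactly as in the upper-bound part of the proof of Lemma \ref{lem:1}, and then estimate that value using the ultrametric inequality. Write $K=\mathbb Q(\alpha)$, $d=[K:\mathbb Q]$, and let $P=a_dX^d+\dots+a_0\in\mathbb Z[X]$ be the $\mathbb Z$-minimal polynomial of $\alpha$. Inside $\overline{\mathbb Q}\subseteq\overline{\mathbb Q_p}$ we have the factorisation $P(X)=a_d\prod_{\sigma:K\to\overline{\mathbb Q}}(X-\sigma(\alpha))$, and since no conjugate of $\alpha$ equals $x$ the value $P(x)$ is nonzero; applying $|\cdot|_p$ and taking logarithms gives
$$\frac{1}{d}\sum_{\sigma:K\to\overline{\mathbb Q}}\log|\sigma(\alpha)-x|_p=\frac{\log|P(x)|_p-\log|a_d|_p}{d}.$$

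Next I would bound the two terms on the right. For the first, the hypothesis $|x|_p\le 1$ together with the ultrametric inequality and $a_i\in\mathbb Z$ (so $|a_i|_p\le 1$) gives $|P(x)|_p\le\max_{0\le i\le d}|a_i|_p\,|x|_p^{\,i}\le 1$, hence $\log|P(x)|_p\le 0$. For the second, every nonzero integer $k$ satisfies $|k|_p\ge|k|^{-1}$, so $-\log|a_d|_p\le\log|a_d|$. Combining these, the average is at most $\frac{\log|a_d|}{d}$, and since Jensen's formula gives $M(P)=|a_d|\prod_{i:|\alpha_i|>1}|\alpha_i|\ge|a_d|$ we conclude $\frac{\log|a_d|}{d}\le\frac{\log M(P)}{d}=h(\alpha)$, which is the claimed bound.

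No step poses a genuine difficulty here. The only points requiring a little care are the ultrametric estimate $|P(x)|_p\le 1$ — this is precisely where the assumption $|x|_p\le 1$ enters, taking over the role played by the remark ``$P(1)\in\mathbb Z$'' in Lemma \ref{lem:1} — and the elementary inequality $|a_d|_p\ge|a_d|^{-1}$ comparing the $p$-adic and archimedean sizes of the leading coefficient, which was already used in Lemma \ref{lem:1} and in the discussion around Proposition \ref{pr:K}. I would also remark that the hypothesis $h(\alpha)\le 1$ is not actually needed for this particular bound; it is presumably retained only for uniformity with the archimedean counterpart Theorem \ref{thm:upper} and with Lemma \ref{lem:1}.
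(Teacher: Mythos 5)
Your proof is correct and follows essentially the same route as the paper: reduce the average to $\frac{\log|P(x)|_p-\log|a_d|_p}{d}$, bound $|P(x)|_p\le 1$ using $|x|_p\le 1$ and the ultrametric inequality, and use $|a_d|_p\ge|a_d|^{-1}$ together with $\log|a_d|\le\log M(P)=dh(\alpha)$. Your added remarks (nonvanishing of $P(x)$, and that $h(\alpha)\le 1$ is not needed here) are accurate and only make the argument more explicit than the paper's.
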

\begin{proof}Let $P=a_dX^d+\dots+a_0\in\mathbb Z[X]$ be the $\mathbb Z$-minimal polynomial of $\alpha$. The average is given by $$A:=\frac{1}{d}\sum_{\sigma:\mathbb Q(\alpha)\to \overline{\mathbb Q}} \log|\sigma(\alpha)-x|_p=\frac{\log|P(x)|_p-\log|a_d|_p}{d}.$$

 Since $|x|_p\le 1$, we have $\log|P(x)|_p\le 0$. Note that for an integer $k\in\mathbb Z\setminus \{0\}$ we have $|k|_p\ge |k|^{-1}$. This implies that $-\log|a_d|_p\le \log|a_d|$. Therefore, we find the upper bound $A\le \frac{\log|a_d|}{d}\le h(\alpha)$.\end{proof}

\begin{theorem}\label{thm:padicRoots}Let $\zeta\in\mu_\infty$ be a root of unity of order $n$. Then for every $\alpha\in\overline{\mathbb Q}\setminus\mu_n$ such that $h(\alpha)\le 1/n$ we have $$\left|\frac{1}{d}\sum_{\sigma:\mathbb Q(\alpha)\to \overline{\mathbb Q}} \log|\sigma(\alpha)-\zeta|_p\right| \le 40h'(\alpha^n)^{1/3}\log\left(\frac{4}{h'(\alpha^n)}\right)+2h(\alpha^n).$$\end{theorem}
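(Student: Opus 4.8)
The plan is to deduce this from the two single-root estimates already at hand, Lemma~\ref{lem:1} and Lemma~\ref{lem:upper}, in the same way that Corollary~\ref{cor:muInf} is deduced from Theorem~\ref{thm:upper} and Corollary~1.8 of \cite{BM} in the archimedean case; the $p$-adic argument is in fact shorter, because the individual upper bounds are supplied directly by Lemma~\ref{lem:upper}.

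First I would use $X^n-1=\prod_{\xi\in\mu_n}(X-\xi)$: for each conjugate $\sigma(\alpha)$ we have $\sigma(\alpha)^n-1=\prod_{\xi\in\mu_n}(\sigma(\alpha)-\xi)$, so taking $\log|\cdot|_p$, summing over all embeddings $\sigma:\mathbb Q(\alpha)\to\overline{\mathbb Q}$ and dividing by $d$ gives
$$\frac{1}{d}\sum_{\sigma:\mathbb Q(\alpha)\to\overline{\mathbb Q}}\log|\sigma(\alpha)^n-1|_p=\sum_{\xi\in\mu_n}A_\xi,\qquad A_\xi:=\frac{1}{d}\sum_{\sigma:\mathbb Q(\alpha)\to\overline{\mathbb Q}}\log|\sigma(\alpha)-\xi|_p.$$
Since every embedding of $\mathbb Q(\alpha^n)$ has exactly $[\mathbb Q(\alpha):\mathbb Q(\alpha^n)]$ extensions to $\mathbb Q(\alpha)$, the left-hand side equals the corresponding average over the conjugates of $\alpha^n$; call it $B$. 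Because $\alpha\notin\mu_n$ we have $\alpha^n\ne 1$, and $h(\alpha^n)=nh(\alpha)\le 1$, so Lemma~\ref{lem:1} applies to $\alpha^n$ and yields $|B|\le 40h'(\alpha^n)^{1/3}\log(4/h'(\alpha^n))+h(\alpha^n)$.

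Next I would bound each $A_\xi$ from above. No conjugate of $\alpha$ lies in $\mu_n$: if $\sigma(\alpha)^n=1$ then applying $\sigma^{-1}$ gives $\alpha^n=1$, a contradiction; in particular no conjugate of $\alpha$ equals $\xi$. Since $|\xi|_p=1\le 1$ and $h(\alpha)\le 1/n\le 1$, Lemma~\ref{lem:upper} with $x=\xi$ gives $A_\xi\le h(\alpha)$ for every $\xi\in\mu_n$. Now write $A_\zeta=B-\sum_{\xi\in\mu_n\setminus\{\zeta\}}A_\xi$. For the lower bound, using $A_\xi\le h(\alpha)$ on the $n-1$ discarded terms and $(n-1)h(\alpha)\le nh(\alpha)=h(\alpha^n)$,
$$A_\zeta\ \ge\ B-(n-1)h(\alpha)\ \ge\ -|B|-h(\alpha^n)\ \ge\ -40h'(\alpha^n)^{1/3}\log(4/h'(\alpha^n))-2h(\alpha^n).$$
For the upper bound, Lemma~\ref{lem:upper} with $x=\zeta$ gives directly $A_\zeta\le h(\alpha)\le h(\alpha^n)\le 2h(\alpha^n)+40h'(\alpha^n)^{1/3}\log(4/h'(\alpha^n))$, the last step because $h(\alpha^n)\le 1$ forces $h'(\alpha^n)\le 2<4$, so the logarithmic term is nonnegative. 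Combining the two bounds proves $|A_\zeta|\le 40h'(\alpha^n)^{1/3}\log(4/h'(\alpha^n))+2h(\alpha^n)$.

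I do not expect a genuine obstacle. The only point needing care is the bookkeeping that identifies $\tfrac1d\sum_{\sigma:\mathbb Q(\alpha)\to\overline{\mathbb Q}}\log|\sigma(\alpha)^n-1|_p$ with the $\mathbb Q(\alpha^n)$-average to which Lemma~\ref{lem:1} applies, together with the (immediate) verifications that $\alpha^n\ne 1$, that $h(\alpha^n)\le 1$ and $h(\alpha)\le 1$, and that $\alpha$ has no conjugate among the $n$-th roots of unity — all consequences of the hypotheses $\alpha\notin\mu_n$ and $h(\alpha)\le 1/n$.
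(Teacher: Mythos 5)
Your proposal is correct and follows essentially the same route as the paper: the factorisation $X^n-1=\prod_{\xi\in\mu_n}(X-\xi)$, Lemma~\ref{lem:1} applied to $\alpha^n\ne 1$ with $h(\alpha^n)=nh(\alpha)\le 1$, and Lemma~\ref{lem:upper} for the individual terms $A_\xi$. If anything you are slightly more explicit than the paper, which leaves implicit both the identification of the $\mathbb Q(\alpha)$-average of $\log|\sigma(\alpha^n)-1|_p$ with the $\mathbb Q(\alpha^n)$-average and the easy upper bound $A_\zeta\le h(\alpha)$ needed for the absolute value.
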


\begin{proof}Let $d=[\mathbb Q(\alpha):\mathbb Q]$. Using that $X^n-1=\prod_{\xi\in\mu_n}(X-\xi)$ we find $$\frac{1}{d}\sum_{\sigma:\mathbb Q(\alpha)\to\overline{\mathbb Q}}\log|\sigma(\alpha^n)-1|_p=\sum_{\xi\in\mu_n}\frac{1}{d}\sum_{\sigma:\mathbb Q(\alpha)\to\overline{\mathbb Q}}\log|\sigma(\alpha)-\xi|_p.$$ Thus, with Lemma \ref{lem:1} applied to $\alpha^n\ne 1$ of height $h(\alpha^n)=nh(\alpha)\le 1$ and Lemma \ref{lem:upper} we find \begin{align*}-\frac{1}{d}\sum_{\sigma:\mathbb Q(\alpha)\to\overline{\mathbb Q}}\log|\sigma(\alpha)-\zeta|_p
&=\sum_{\xi\in\mu_n\setminus\{\zeta\}}\frac{1}{d}\sum_{\sigma:\mathbb Q(\alpha)\to\overline{\mathbb Q}}\log|\sigma(\alpha)-\xi|_p-\frac{1}{d}\sum_{\sigma:\mathbb Q(\alpha)\to\overline{\mathbb Q}}\log|\sigma(\alpha^n)-1|_p\\
&\le (n-1)h(\alpha)+40h'(\alpha^n)^{1/3}\log\left({4}/{h'(\alpha^n)}\right)+h(\alpha^n)\\
&\le 40h'(\alpha^n)^{1/3}\log\left(\frac{4}{h'(\alpha^n)}\right)+2h(\alpha^n).\qedhere\end{align*}\end{proof}

\end{document}